\documentclass[a4paper,12pt]{amsart}

\usepackage[
backend=biber,
style=alphabetic,
sorting=nyt
]{biblatex}

\usepackage{amsfonts}
\usepackage{amsmath}
\usepackage{mathrsfs}
\usepackage{amssymb}
\usepackage{amsthm}
\usepackage{amscd}
\usepackage{latexsym}
\usepackage{amstext}
\usepackage{amsxtra}
\usepackage[utf8]{inputenc}
\usepackage[english]{babel}

\usepackage{geometry}
\usepackage{color}
\usepackage{paralist}

\usepackage[colorinlistoftodos]{todonotes}
\usepackage[all]{xy}
\usepackage{tikz}

\usepackage{enumerate}
\usepackage{multirow}
\usepackage{wasysym}
\usepackage{latexsym} 
\usepackage{comment}
\usepackage{colonequals}

\usepackage[
        draft=false,
        colorlinks, citecolor=darkgreen,
        pdfauthor={L.Fassina,G Pirola},
        pdftitle={},
        linktocpage        
]{hyperref}
\hypersetup{citecolor=blue,linktocpage}

\newtheorem{theorem}{Theorem}[section]
\newtheorem*{theorem*}{Theorem}
\newtheorem{lemma}[theorem]{Lemma}
\newtheorem{corollary}[theorem]{Corollary}
\newtheorem{proposition}[theorem]{Proposition}
\newtheorem{remark}[theorem]{Remark}
\newtheorem{definition}[theorem]{Definition}

\newcommand{\nc}{\newcommand} 
\nc{\cH}{{\mathcal H}}
\nc{\cA}{{\mathcal A}}
\nc{\cG}{{\mathcal G}}
\nc{\cC}{{\mathcal C}}
\nc{\cD}{{\mathcal D}}
\nc{\cO}{{\mathcal O}}
\nc{\cI}{{\mathcal I}}
\nc{\cB}{{\mathcal B}}
\nc{\cY}{{\mathcal Y}}
\nc{\cK}{{\mathcal K}} 
\nc{\cX}{{\mathcal X}}
\nc{\cS}{{\mathcal S}}
\nc{\cE}{{\mathcal E}}
\nc{\cF}{{\mathcal F}}
\nc{\cZ}{{\mathcal Z}}
\nc{\cQ}{{\mathcal Q}}
\nc{\cN}{{\mathcal N}}
\nc{\cP}{{\mathcal P}}
\nc{\cL}{{\mathcal L}}
\nc{\cM}{{\mathcal M}}
\nc{\cT}{{\mathcal T}}
\nc{\cW}{{\mathcal W}}
\nc{\cU}{{\mathcal U}}
\nc{\cJ}{{\mathcal J}}
\nc{\cV}{{\mathcal V}}
\nc{\bH}{{\mathbb H}}
\nc{\bA}{{\mathbb A}}
\nc{\bG}{{\mathbb G}}
\nc{\bC}{{\mathbb C}}
\nc{\bO}{{\mathbb O}}
\nc{\bI}{{\mathbb I}}

\nc{\bB}{{\mathbb B}}
\nc{\bY}{{\mathbb Y}}
\nc{\bK}{{\mathbb K}} 
\nc{\bX}{{\mathbb X}}
\nc{\bS}{{\mathbb S}}
\nc{\bE}{{\mathbb E}}
\nc{\bF}{{\mathbb F}}
\nc{\bZ}{{\mathbb Z}}
\nc{\bQ}{{\mathbb Q}}
\nc{\bN}{{\mathbb N}}
\nc{\bP}{{\mathbb P}}
\nc{\bL}{{\mathbb L}}
\nc{\bM}{{\mathbb M}}
\nc{\bT}{{\mathbb T}}
\nc{\bW}{{\mathbb W}}
\nc{\bU}{{\mathbb U}}
\nc{\bD}{{\mathbb D}}
\nc{\bJ}{{\mathbb J}}
\nc{\bV}{{\mathbb V}}
\nc{\bbZ}{{\mathbb Z}}
\nc{\bR}{{\mathbb R}}
\nc{\fr}{{\rightarrow}}
\nc{\co}{{\nabla}}
\newcommand{\debar}{{\bar\partial}}

\nc{\cu}{{\barline{\nabla}}}

\nc{\OO}{\mathcal{O}}
\nc{\PP}{\mathbb{P}}

\DeclareMathOperator{\Pic}{Pic}

\DeclareMathOperator{\codim}{codim}

\DeclareMathOperator{\Ker}{Ker}
\DeclareMathOperator{\Supp}{Supp}

\newcommand{\C}{\mathbb{C}}

\nc{\fA}{{\mathfrak{A}}}
\nc{\fB}{{\mathfrak{B}}}
\nc{\fC}{{\mathfrak{C}}}
\nc{\fD}{{\mathfrak{D}}}
\nc{\fE}{{\mathfrak{E}}}
\nc{\fF}{{\mathfrak{F}}}

\begin{document}


\title{A few remarks on sections of the Picard bundle of family of curves}
\date{\today}



\author{Lorenzo Fassina}
\address{Dipartimento di Matematica,
Universit\`a degli Studi di Pavia,
Via Ferrata 5,
27100 Pavia, Italy}
\email{lorenzo.fassina02@universitadipavia.it}

\author{Gian Pietro Pirola}
\address{Dipartimento di Matematica,
Universit\`a degli Studi di Pavia,
Via Ferrata 5,
27100 Pavia, Italy}
\email{gianpietro.pirola@unipv.it}


\maketitle

\begin{abstract}
    We study sections of the relative Picard bundle of a family of curves of genus $g \geq 2$ through the rank of the associated normal function. Using Griffiths’ formula for the infinitesimal invariant and higher Schiffer variations, we establish a numerical inequality relating the rank, the minimal support of a representing divisor, and the modular dimension of the family. When the modular map is dominant, we obtain a sharp classification: equality occurs only for multiples of odd theta characteristics or of the canonical section. As applications, we derive geometric consequences for plane curves, obtaining results on intersections with very general quartic curves, in the spirit of the work of Chen–Riedl–Yeong, and with quintic curves.
\end{abstract}


\section*{Introduction}

A \emph{normal function} is a  holomorphic section $\nu$ of the intermediate Jacobian fibration associated with a family of complex manifolds and satisfying the horizontality condition. Their study goes back to Griffiths, who introduced them as a tool for understanding how algebraic cycles vary under the Abel–Jacobi map. The behaviour of a normal function reflects subtle geometric information encoded in the cycles that determine the section.
In this paper we specialise this framework to the simpler, yet in our view still quite interesting, case of sections of the Picard bundle of a complex curve of genus $g \geq 2$. In particular, we investigate the rank, defined as the real dimension of its image under a local trivialisation of the Jacobian fibration. Very relevant results in this direction can be found in \cite{Betti_map}. Moreover in a recent work, Richard Hain (see \cite{Hain}) computed the rank of the normal function associated with the Ceresa cycle. 

A fundamental tool is the \emph{infinitesimal invariant} introduced by
Griffiths \cite{infinitesimal} and refined by Green \cite{Green} and Voisin \cite{Voisin-une-remarque}.  
It measures the obstruction to a normal function being locally constant. The subtle connections between the differential
associated to a normal function have been pointed out in the Hain paper.

Let $\pi: \mathcal{C} \to Y$ be a smooth family of complete complex curves of genus $g \geq 2$.
Given a section of the Picard bundle
\[
    \psi : Y \longrightarrow \mathrm{Pic}^n(\pi),
\]
we consider the associated normal function
\[
 \nu := (2g-2)\psi - n \kappa ,
\]
where $\kappa$ is the canonical section of $\mathrm{Pic}^{2g-2}(\pi)$.

In our case the rank measures
how far the normal function is from the canonical section modulo torsion. As in Hain’s work, our approach relies on the infinitesimal invariant. The principal tool is a refinement of a formula for divisors on curves established by Griffiths in his seminal article \cite{infinitesimal}. In order to apply Griffiths' formula we need to satisfy some striking conditions. First, the normal function must be represented by a suitable Weil divisor; additionally, a certain holomorphic form is required to vanish along the support of this divisor. It emerges that such a representation is feasible only when the support of the divisor is sufficiently small. Finally
to get substantial results we can apply the Griffiths' formula only on special tangent
directions of the moduli space that preserve the form of type $(1,0)$. Among the admissible directions, the Schiffer variations play a distinguished role, since they admit an explicit description in terms of the evaluation of holomorphic forms on the support of the divisor defining the normal function. These directions
have been studied quite systematically in \cite{Col-Fre-Pir}.

We denote by $d_S(\psi)$ the cardinality of the support of a divisor representing the section $\psi$ on a general fibre.  
Our main result shows that a section with maximal possible support and vanishing rank must be a multiple either of an odd theta characteristic or of the canonical section. In particular we can prove the following.



\begin{theorem}[= Theorem \ref{theo-spin-section}]
    Let $\pi : \mathcal{C} \to Y$ be a family of complex curves of genus $g \ge 2$. Assume that the image of the modular map $m: Y \to \mathcal{M}_g$ contains an analytic open subset.
    Let $\psi$ be a section of the Picard bundle $\mathrm{Pic}^n(\pi)$, and consider the associated normal function
    \[
        \nu = (2g-2)\psi - n\kappa .
    \]
    If $\nu$ is locally constant, then $d_S(\psi) = g-1$ if and only if, for a general point $y \in Y$ and every divisor $D$ such that $\psi(y) = \mathcal{O}_{C_y}(D)$ with $d_S(D) = d_S(\psi)$, there exists a holomorphic form $\omega \in H^0(C_y,\omega_{C_y})$ with divisor $K := \sum_{i=1}^{g-1} b_i p_i$ and support $\mathrm{Supp}(K) = \{p_1,\dots, p_{g-1}\}$, such that one of the following holds:
    \begin{enumerate}
        \item $D$ is a multiple of the divisor $\sum_{i=1}^{g-1} p_i$, i.e.\ $\psi$ is a multiple of an odd spin section;
        \item $D$ is a multiple of $K$, i.e.\ $\psi$ is a multiple of the canonical section.
    \end{enumerate}
    In either case, $\mathrm{Supp}(D) = \mathrm{Supp}(K)$.
\end{theorem}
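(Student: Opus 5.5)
The plan is to treat the two implications separately. The ``if'' direction is a direct computation. The ``only if'' direction uses the vanishing of the infinitesimal invariant, through Griffiths' formula along (higher) Schiffer variations, to force the coefficients of $D$ to be rigidly tied to the zero orders of a holomorphic form.

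For the ``if'' direction, suppose first that $\omega$ has divisor $K=2\sum_{i=1}^{g-1}p_i$, so that $\theta=\mathcal{O}(\sum p_i)$ is an odd theta characteristic, and let $D=m\sum p_i$. Then $n=m(g-1)$ and
\[
\nu=(2g-2)m\textstyle\sum p_i-m(g-1)K=m(g-1)\bigl(2\sum p_i-K\bigr)=0 .
\]
So $\nu$ is identically zero, in particular locally constant. In case (2) the computation $(2g-2)mK-m(2g-2)K=0$ is immediate. It remains to check that $d_S(\psi)$ equals $g-1$ and is not smaller. For this I will invoke the numerical inequality proved earlier, which bounds the rank from below in terms of $d_S(\psi)$ and the modular dimension. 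When $m$ is dominant and the rank is $0$, that inequality should give $d_S(\psi)\ge g-1$ unless $\nu$ is torsion-trivial in a degenerate way. I will check that this degenerate case is exactly the one excluded by the hypothesis that the representatives realise the minimal support.

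For the ``only if'' direction, fix a general $y$ and a divisor $D=\sum_{i=1}^{g-1}a_iq_i$ with $d_S(D)=g-1$. Since $\deg\sum q_i=g-1$, we have $h^0(K_{C_y}-\sum q_i)\ge 1$, so there is a nonzero $\omega$ vanishing on $\mathrm{Supp}(D)$. This is exactly the hypothesis needed to apply Griffiths' formula to $(2g-2)D-nK$ with $K=\mathrm{div}(\omega)=\sum b_i q_i+E$. The first step is to show $E=0$, i.e.\ $\mathrm{Supp}(K)=\mathrm{Supp}(D)$. Otherwise the extra zeros of $\omega$ contribute residue terms that cannot be cancelled. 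I will detect them by Schiffer variations centred at the points of $E$: these directions preserve $\omega$ to the appropriate order, and by the dominance of the modular map they are honest tangent directions to $Y$.

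Next I pair the infinitesimal invariant against $\omega$ along higher Schiffer variations $\xi_{q_j}^{(k)}$ centred at the $q_j$. Griffiths' formula then expresses $\langle\delta\nu(\xi),\omega\rangle$ as a sum of local residues at the $q_i$. These residues involve $(2g-2)a_i-nb_i$ multiplied by Laurent coefficients of $\omega$ and of the other forms $\eta\in H^0(\omega_{C_y})$. Local constancy makes all of these vanish. Letting $\eta$ range over a basis and using that the evaluation maps at the points (the ones studied in \cite{Col-Fre-Pir}) are as independent as possible on a general curve, I expect the system to leave only two possibilities. Either $(2g-2)a_i=nb_i$ for all $i$, which gives $D$ proportional to $K$. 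Or all $a_i$ are equal and all $b_i=2$, which gives $D$ proportional to $\sum p_i$ with $2\sum p_i=K$, i.e.\ an odd spin structure.

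The main obstacle is the linear-algebra step that extracts this dichotomy from the vanishing residues. One has to know that, for a general curve and a form with only $g-1$ zeros, the relevant jets of holomorphic forms at the $q_i$ span enough to separate the coefficients. My first attempt will be a dimension count: the locus of pairs (curve, form with prescribed zero orders $b_i$) has dimension $2g-2+(g-1)-\sum(b_i-1)$, and comparing this with $3g-3$ via dominance should force either all $b_i=2$ or the proportionality $a_i\propto b_i$.
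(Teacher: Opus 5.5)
Your first step (Riemann--Roch to get $\omega$ vanishing on $\mathrm{Supp}(D)$, then Schiffer variations at the extra zeros to show $E=0$) is exactly the paper's. The gap is in the core step. You leave it as ``I expect the system to leave only two possibilities'' and propose to close it with jet-independence of forms and a dimension count. Neither is needed, and the count cannot work.

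The tool you already used for $E$ settles the core step. For each $p_i\in\mathrm{Supp}(D)$, let $\theta_i\in H^0(\mathcal{O}_Z)$, with $Z=\mathrm{div}(\omega)$, be $1$ on the component of $Z$ at $p_i$ and $0$ elsewhere. Then $\partial_Z\theta_i\in H^1(C_y,T_{C_y})_\omega$, and it equals $\kappa_y(\xi_i)$ for some $\xi_i\in T_{Y,y}$ because the image of $m$ is open. Theorem~\ref{Griff_theorem}, applied to the representative $(2g-2)D-nK$ (which is supported in $Z$), gives
\[
0=\delta\nu(y)(\xi_i\otimes\omega)=(2g-2)a_i-nb_i .
\]
So $(2g-2)a_i=nb_i$ for every $i$. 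No other forms $\eta$ and no residue system enter, since the formula for decomposable tensors only involves $\omega$.

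The dichotomy between (1) and (2) is then arithmetic. Your two alternatives are not exclusive: the second is a special case of the first. Since $g-1$ positive integers $b_i$ sum to $2g-2$, we have $\gcd(b_1,\dots,b_{g-1})\in\{1,2\}$.
\begin{itemize}
\item If the gcd is $2$, all $b_i=2$. Then $\mathcal{O}(\sum p_i)$ is a theta characteristic with a section, and $a_i=n/(g-1)$ is constant: this is case (1).
\item If the gcd is $1$, integrality of $a_i=\frac{n}{2g-2}b_i$ forces $\frac{n}{2g-2}\in\mathbb{Z}$, so $D=kK$: this is case (2).
\end{itemize}
A dimension count on pairs (curve, form) cannot yield this, for two reasons. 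It never sees the $a_i$. Also, your own formula gives $2g-2$ for every partition with $g-1$ parts (as $\sum(b_i-1)=g-1$), so it cannot force $b_i=2$. Indeed both cases occur on a general curve of genus $3$: bitangents and flex lines, as in Proposition~\ref{prop quartic}.

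Two minor points. Killing $E$ uses $n\neq 0$, since a point of $E$ appears in $(2g-2)D-nK$ with coefficient $-nc_j$. Invoking Theorem~\ref{main th 1} for the converse needs $n\neq0$ as well. With the statement's formulation, though, the converse is immediate: $D$ realises $d_S(\psi)$, and $\mathrm{Supp}(D)=\mathrm{Supp}(K)$ has $g-1$ points.
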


These general results admit concrete and geometric applications. In forthcoming joint work with I. Biswas \cite{Bis_pir_fanzo}, we apply them to the study of the intersection between the symmetric theta divisor and torsion points on a very general Jacobian, and, via \cite{A-A-G-Vikram}, to the Dirac operator on a general even spin curve. For families of curves with small moduli dimension the resulting statements are necessarily weaker; nevertheless, in the final section we show that our computations still yield meaningful consequences for families of plane curves. In particular, we obtain the following result.

\begin{theorem}\label{prop_d_4_5 introduz}
    Let $C \subset \mathbb{P}^2$ be a very general plane curve of degree $d = 4$ or $5$, and let $D \subset \mathbb{P}^2$ be a plane curve. 
    Assume that $C$ and $D$ intersect in exactly $d-2$ distinct points.
    Then, these points lie either on a bitangent line or on a flex line of $C$.
    Moreover, let $\psi \in \mathrm{Pic}^n(\pi)$ be the section of the Picard bundle of the family of plane curves such that associated normal function $\nu$ is locally constant. Then
    \[
    d_S(\psi) \geq d-2.
    \]
\end{theorem}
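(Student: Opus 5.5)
We will deduce both assertions from the general inequality relating rank, support and modular dimension (the Griffiths-formula/Schiffer-variation machinery announced in the introduction). We apply it to the universal family of smooth plane curves of degree $d$, where $d=4$ or $5$. The modular dimension of this family is $\dim \mathcal{P}(d) - \dim PGL_3$. For $d=4$ this equals $14-8=6=3g-3$, so the modular map is dominant, since $g=3$. For $d=5$ it equals $20-8=12<3g-3=18$, since $g=6$.

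The first step is to set up the normal function from the configuration. Let $C$ be very general and let $D$ be a curve of degree $e$ meeting $C$ in exactly $d-2$ distinct points $q_1,\dots,q_{d-2}$ with multiplicities $m_i$, so that $\sum m_i = de$. Because $C$ is very general, the configuration deforms over an analytic open subset of the family of plane curves: countably many components of the relevant incidence variety must dominate. We therefore obtain a section $\psi(y)=\mathcal{O}_{C_y}(\sum m_i q_i(y))$ over a family $Y$ with the same modular image. On the other hand, $\mathcal{O}_C(D\cdot C)=\mathcal{O}_C(e)=\mathcal{O}_C(1)^{e}$, and $\omega_C=\mathcal{O}_C(d-3)$. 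Hence $(2g-2)\psi-n\kappa$ is torsion, and in particular locally constant, with $d_S(\psi)\le d-2$.

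The second step proves the inequality $d_S(\psi)\ge d-2$ for every locally constant $\nu$. We choose Schiffer variations at the $d_S$ support points, together with higher Schiffer variations, that are tangent to the plane-curve locus. For $d=4$ all directions are allowed. For $d=5$ we must restrict to directions coming from $H^0(\mathcal{O}_C(d))$ modulo $\mathfrak{gl}_3$. Griffiths' formula makes the infinitesimal invariant vanish on these directions. This forces a canonical form $\omega=\ell^{d-3}$-type section, i.e.\ a degree $d-3$ curve, to vanish to high order at the support points. A dimension count then gives $d_S\ge d-2$. I expect this count to be the main obstacle for $d=5$, because the available tangent directions fill only a $12$-dimensional subspace. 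The check is that evaluating $H^0(\mathcal{O}_C(2))$ and its derivatives at fewer than $3$ points cannot kill the relevant pairing; I would verify this using conics through two points of a very general quintic.

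The third step identifies the configurations that attain equality, $d_S=d-2$. For $d=4$ (so $g-1=2=d-2$), Theorem \ref{theo-spin-section} applies directly, since the modular map is dominant. The divisor $\sum m_iq_i$ has support equal to $\mathrm{Supp}(K)$ for some canonical divisor $K$, that is, a line section $2p_1+2p_2$ (a bitangent, which is an odd theta characteristic) or $3p_1+p_2$ (a flex line). For $d=5$ we mimic the proof of that theorem inside the plane-curve directions. The vanishing conditions force the $3$ points to be the support of a divisor cut by a conic, i.e.\ a canonical divisor, that is contained in a line. The line through the points must then be tangent at the points, giving a bitangent-type or flex-type line configuration. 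Ruling out the conic case, and in particular the case of conics that are not double lines, is the delicate part. There I would use very generality once more to exclude conics with only $3$ contact points.
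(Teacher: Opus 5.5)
Your treatment of $d=4$ matches the paper. The modular map is dominant onto $\mathcal{M}_3$, Theorem~\ref{main th 1} gives $d_S(\psi)\ge 2$, and Theorem~\ref{theo-spin-section} forces the support to be that of a line section $2p_1+2p_2$ or $3p_1+p_2$.

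The gap is $d=5$. For $g=6$ one has $3g-3=15$, not $18$, so $\mathcal{V}_5$ has codimension $3$. A dimension count of the kind you describe is just Theorem~\ref{main th 1}, which yields only $d_S(\psi)\ge g-1-3=2$, one short of the claim. You also cannot simply mimic Theorem~\ref{theo-spin-section} inside the plane-curve directions. A Schiffer class $\zeta_q$ at a simple zero of $\omega$ never lies in the tangent space $R^5=J_4^{\perp}$ of $\mathcal{V}_5$: its pairing with $J_4$ is, up to a nonzero factor, evaluation at $q$, and $\partial_{z_0}f,\partial_{z_1}f,\partial_{z_2}f$ have no common zero on $C$. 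On an admissible direction $\partial_Z\theta\in R^5\cap H^1(C,T_C)_\omega$, Griffiths' formula returns $\sum_{p\in Z}c_p\theta(p)$ over all zeros of $\omega$, where $c_p$ are the coefficients of the divisor representing $\nu$. So everything hinges on controlling where $\theta$ is supported. Your conic route (show the support is cut out by a conic, then exclude conics other than double lines) has no mechanism for this. If $\omega$ is a conic through $p_1,p_2,p_3$ and $K=\mathrm{div}(\omega)$, every zero of $\omega$ generically has $c_p\neq 0$, so any admissible $\theta$ picks up a mixture of coefficients.

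The missing ingredient is Proposition~\ref{prop existence schiffer}. Let $\omega=h\cdot t$ be a product of two line sections with disjoint supports, and let $p\in\mathrm{Supp}(T)$. Then there is a class $\zeta=\zeta_p+\eta\in R^5$ with $\zeta\cdot\omega=0$ and $\eta$ in the span of the Schiffer classes at the other four points $q_1,\dots,q_4$ of $T$. By Serre duality and $R^5=J_4^{\perp}$, this amounts to showing that no nonzero $w\in J_4$ vanishes at $q_1,\dots,q_4$:
\begin{itemize}
\item if also $w(p)=0$, then $t$ divides $w$, which is excluded for general $f$;
\item otherwise $f=Q_4t+Lw$, hence $Q_4t\in J_5$. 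Injectivity of $\cdot t\colon R^4\to R^5$ (Lemma~\ref{lem:injective-L}) then forces $Q_4\in J_4$, again contradicting generality.
\end{itemize}

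One then chooses $\omega$ and the representative of $K$ so that $\eta$ lands where $c_p=0$.
\begin{itemize}
\item If $\mathrm{Supp}(D)=\{p_1,p_2\}$, take $p_1\in\mathrm{Supp}(H)$, $p_2\in\mathrm{Supp}(T)$ and $K=2H$. Then $\delta\nu(y)(\xi\otimes\omega)$ is a nonzero multiple of $a_2$, so $a_2=0$.
\item If $\mathrm{Supp}(D)=\{p_1,p_2,p_3\}$ is not collinear, take $H$ the section by the line through $p_1,p_2$, $T$ the section by a general line through $p_3$, and $K=2H$. This gives $a_3=0$.
\end{itemize}
This yields $d_S(\psi)\ge 3$ and collinearity directly, with no conics involved.
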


Moreover, we conjecture that the same statement holds for every degree \(d > 5\). This formulation recovers and provides a conceptual framework for geometric phenomena previously investigated by Xu \cite{Xu}, who established a lower bound on the intersection of two plane curves and then further proved in degree $4$ in \cite{C-R-Y}.
For quartic curves, the result follows as an application of our earlier analysis of the rank of the normal function. The case of quintic curves is more constructive and relies on the explicit construction of suitable Schiffer variations. We hope to be able to extend this result for $d>5$. For this, it is necessary to
improve our knowledge of this infinitesimal variation of the smooth plane curve.
More generally, we believe that this method could give some new information on the
algebraic hyperbolicity of surfaces.

The paper is organised as follows.
Section~1 reviews the construction of the relative Picard bundle.
Section~2 discusses normal functions and their rank.
Section~3 recalls the infinitesimal invariant of Griffiths and its dual interpretation.
Section~4 introduces higher Schiffer variations.
Section~5 contains Griffiths' formula for decomposable tensors and first applications.
Section~6 contains the proof of the main inequality and its consequences.
Section~7 applies the theory to quartic and quintic plane curves.

\section*{Acknowledgements}
The authors are grateful to A.~Beauville and C.~Voisin for useful conversations and insightful remarks that contributed to the development of this work.



\section{The Relative Picard Bundle}
 Let $\pi: \cC\to Y$ be a smooth family of complete complex curves of genus $g \geq 2$, where $Y$ is a connected complex variety. We define $C_y := \pi^{-1}(y)$ as the fiber over $y \in Y$ and set $C := C_0$ for a base point $0 \in Y$. Let $\cM_g$ be the moduli space of algebraic curves of genus $g$ and consider the modular map $m: Y\to \cM_g$ defined by
\[
    m(y)=[C_y],
\]
that sends each point $y \in Y$ to the isomorphism class of the fiber $C_y$.

The \emph{Picard bundle} relative to the family $\pi: \cC\to Y$ is defined as
\[
  \mathrm{Pic}(\pi) := R^1 \pi_* \mathcal{O}_{\mathcal{C}}^{*}.
\]
This is a fiber bundle over $Y$, whose fiber at a point $y \in Y$ is the Picard group of the corresponding fiber:
\[
    \mathrm{Pic}(\pi)_y = \mathrm{Pic}(C_y).
\]
This decomposes into connected components indexed by the degree:
\[
  \mathrm{Pic}(\pi) = \coprod_{d \in \mathbb{Z}} \mathrm{Pic}^d(\pi).
\]

We will consider the holomorphic sections of $\pi_d: \mathrm{Pic}^d(\pi) \to Y$, where $\pi_d$ is the relative degree-$d$ Picard bundle, whose fiber over $y \in Y$ is the degree-$d$ component $\mathrm{Pic}^d(C_y)$ (for more details
about the Picard bundle refer to \cite[Chap.~4, Sec.~2]{ACG1}).
An important example is the canonical section. This is defined by considering, for each fiber curve $C_y$, its canonical line bundle $\omega_{C_y}$ of degree $2g-2$. This defines a global section
\[
  \kappa: Y \to \mathrm{Pic}^{2g - 2}(\pi), \qquad \kappa(y) := \omega_{C_y},
\]
of the degree-$2g-2$ Picard bundle. 

A further relevant instance is provided by sections
\(\psi : Y \to \mathrm{Pic}^{g-1}(\pi)\) of the $(g-1)$-Picard bundle satisfying
\[
  2\cdot \psi = \kappa,
\]
referred to as \emph{spin sections}. These are said to be \emph{even} or
\emph{odd} according to the parity of \(h^0(C_y,\psi(y))\).

A more general construction is obtained as follows. Let $\mathcal{D} \subset \mathcal{C}$ be a relative divisor such that for every $y \in Y$ the divisor $D_y := (\mathcal{D} \cdot C_y)$ is a divisor of $C_y$. More precisely, let 
\[
    \iota_y : C_y \rightarrow \mathcal{C}
\]
be the inclusion, we define $\mathcal{O}_{C_y}(D_y) := \iota_y^\ast(\mathcal{O}_{\mathcal{C}}(\mathcal{D}))$. Then let
\begin{equation}\label{ex relative divisor}
    \psi_{\mathcal{D}}(y)= \mathcal{O}_{C_y}(D_y) 
\end{equation}
be the associated section of the Picard bundle. 


 \begin{definition}
    For a divisor $D = \sum_{i=1}^k a_ip_i$ on the curve $C$, with $p_i \neq p_j$ and $a_i \neq 0$, let the support of $D$ be the set of points
    \[
        \mathrm{Supp}(D) := \{p_1,\dots, p_k\}.
    \]
     Define the degree of the support of a divisor $D$ of $C$ as the cardinality of its support
    \[
        d_S(D):= \#\mathrm{Supp}(D) = k.
    \]
    For a line bundle $L$ on $C$ we define 
    \[
        d_S(L) := \min_{L \cong \mathcal{O}_C(D)} d_S(D).
    \] 
\end{definition}

\begin{definition}
    Let $\psi: Y \rightarrow \mathrm{Pic}^d(\pi)$ be a section of the Picard bundle. We define the degree of the support of $\psi$ as
    \[
        d_S(\psi) = d_S(\psi(y))
    \]
    where $y \in Y$ is a general point.
\end{definition}

\begin{remark}
    With this definition for a general section $\psi$ of $\mathrm{Pic}^d(\pi)$ we have $d_S(\psi) \leq g$ by the Jacobi inversion theorem. For the canonical section we have $d_S(\kappa) \leq g-1$; this can be seen taking an odd theta characteristic or taking a canonical section with maximal vanishing at a Weierstrass point.
\end{remark}

\section{Normal functions}
\subsection{Basic theory of normal functions}

When $d = 0$, the degree-zero component of the Picard bundle $\mathrm{Pic}^0(\pi)$ is isomorphic to the Jacobian fibration $J(\pi)$ defined by
$$
 \frac{\mathcal{H}}{\mathcal{F} + R^1\pi_* \mathbb{Z}} \longrightarrow Y,
$$
where $\mathcal{H} := R^1\pi_* \mathbb{C}$ is the holomorphic vector bundle whose fiber at $y \in Y$ is 
$$
\mathcal{H}_y = H^1(C_y, \mathbb{C}),
$$ 
and $\mathcal{F}$ is the is the holomorphic subbundle whose fiber is 
$$
\mathcal{F}_y = H^0(C_y, \omega_{C_y}).
$$

Sections of $J(\pi)$ are classically known as \emph{normal functions} (see \cite[Chap.~2, Sec.~7]{Voisin} and \cite[Chap.~9]{Carlson} for a more general definition).

Let $\mathcal{D} \subset \mathcal{C}$ be a relative divisor such that for every $y \in Y$ the divisor $D_y$ of $C_y$ has degree zero. Then, we can define a normal function
\[
    \nu : Y \rightarrow \frac{\mathcal{H}}{\mathcal{F} + R^1\pi_* \mathbb{Z}}
\]
setting $\nu(y) = \mathcal{O}_{C_y}(D_y) \in \operatorname{Pic}^0(C_y)$, the Abel-Jacobi map.

A basic example of a normal function, that will be used later, is the following. Let
\[
\psi: Y \to \mathrm{Pic}^n(\pi)
\]
be a global section of the degree-$n$ Picard bundle. Then we define the \emph{associated normal function}
\[
\nu := (2g - 2)\psi - n\kappa,
\]
where $\kappa$ is the canonical section.

\subsection{Rank of a normal function}

Let $J(\pi)$ be the jacobian fibration. There is an isomorphism of smooth group bundles 
\[
    J(\pi) \stackrel{f}\rightarrow \frac{R^1\pi_*\mathbb{R}}{R^1\pi_*\mathbb{Z}}.
\]
For any simply connected open subset $U \subset Y$, the restriction of $J(\pi)$ to $U$ is isomorphic, as a smooth group bundle, to $T^{2g} \times U$, where $T^{2g} = \mathbb{R}^{2g}/\mathbb{Z}^{2g}$ denotes a real torus of dimension $2g$, and this local trivialization preserves the group structure on each fibre. Consider the map $\phi:= p_1 \circ f_{|_U}$
$$
J(\pi)_{|_U} \stackrel{f_{|_U}}{\longrightarrow} T^{2g} \times U \stackrel{p_1}{\longrightarrow} T^{2g},
$$
given by the composition of the restriction of $f$ to $U$ with the projection on the first factor $p_1$.

\begin{definition}\label{def rank}
    Let $\nu: Y \rightarrow J(\pi)$ be a normal function and $\nu_{|U}$ its restriction to $U$. Define the rank of $\nu$ as 
    \[
        \mathrm{rk} (\nu) = \frac{\dim((\phi\circ \nu )(U))}{2}
    \]
    where $\dim$ is the real topological dimension.
\end{definition}

\begin{remark}
    Following Hain \cite[Sec.~3]{Hain} and \cite{Betti_map}, the rank of $\nu$ is always an integer. Indeed, the fibers of $\phi\circ \nu$ are complex subvarieties of $J(\pi)_{|U}$ and then the dimension of the image is even.
\end{remark}

\begin{definition}\label{def-local-const}
    A normal function $\nu$ defined on the jacobian fibration $J(\pi)$ is said to be locally constant if 
    \[
    \mathrm{rk} (\nu) = 0.
    \]
\end{definition}

\begin{remark}
    An important example of a locally constant normal function is given by a torsion section.
\end{remark}

\section{The Griffiths infinitesimal invariant}
In this section, we review and elaborate on work of Griffiths\cite{infinitesimal}, Green\cite{Green} and Voisin\cite{Voisin-une-remarque} on invariants of normal functions associated to a family of curves. 

\subsection{Definition of the infinitesimal invariant}

The Griffiths infinitesimal invariant is an obstruction to a normal function being locally constant. Let us make it more precise. Recall that $\mathcal{H}$ is a flat vector bundle and hence it is equipped with a flat connection $\nabla$
\[
\nabla : \mathcal{H} \rightarrow \mathcal{H} \otimes \Omega^1_Y
\]
called the Gauss-Manin connection.

Set $\mathcal{H}^{0,1} := \frac{\mathcal{H}}{\mathcal{F}} \cong \mathcal{F}^*$. Consider the bundle map, representing the IVHS map,
\[
    \overline{\nabla} : \mathcal{F} \to \mathcal{H}^{0,1} \otimes \Omega_Y^1,
\]
defined, over a point $y \in Y$, by
\[
    \overline\nabla_\xi(\omega) := \kappa_y(\xi) \cdot \omega \in H^1(\mathcal{O}_{C_y}) \cong H^{0,1}(C)
\]
where $\kappa_y: T_{Y,y} \rightarrow H^1(T_{C_y})$ is the Kodaira-Spencer map and $\cdot$ is the cup product.
Following Voisin \cite[Chap.~2, Sec.~7]{Voisin} and Green~\cite{Green}, given a normal function $\nu$, we define the infinitesimal invariant. Let $\tilde{\nu}$ a local lifting of $\nu$ in $\mathcal{H}$, and consider the projection $\overline{\nabla \tilde{\nu}}$ of $\nabla \tilde{\nu}$ in the quotient
\[
    \frac{\mathcal{H}}{\mathcal{F}} \otimes \Omega^1_Y \cong \mathcal{H}^{0,1} \otimes \Omega^1_Y.
\]
Then consider the class
\[
    [\overline{\nabla \tilde{\nu}}] \in \frac{\mathcal{H}^{0,1}\otimes \Omega^1_Y}{\overline\nabla \mathcal{F}}.
\]
This is well defined and does not depend on the choice of lifting (see Lemma~7.7 in \cite{Voisin}). We denote by $\delta\nu$ this class and call it the infinitesimal invariant. It is useful for the computation to define also the dual version of $\delta\nu$. Let us consider the transpose of the Gauss–Manin connection
\[
    \overline{\nabla}^t : \mathcal{F} \otimes T_Y\to \mathcal{H}^{0,1}
\] 
Choose a point $y \in Y$ , then $\overline{\nabla}^t : H^0(C_y, \omega_{C_y}) \otimes T_{Y,y} \to H^1(C_y, \mathcal{O}_{C_y})$ is given by
\begin{equation}\label{Gauss_Manin}
    \overline{\nabla}^t \left( \sum_i \omega_i \otimes \xi_i \right) = \sum_i \overline{\nabla}_{\xi_i}(\omega_i) = \sum_i \kappa_y(\xi_i) \cdot \omega_i.
\end{equation}
By duality we have
\[
    \frac{\mathcal{H}^{0,1} \otimes \Omega_Y}{\overline\nabla \mathcal{F}} \cong \left(\ker(\overline{\nabla}^t)\right)^*,
\]
then we can consider $\delta\nu$ as a section of this last vector bundle. Over a point $y \in Y$, we have:
\begin{equation}\label{integrale inf inv}
    \delta\nu(y)\left( \sum_i \omega_i \otimes \xi_i \right) = \sum_i \int_C \nabla_{\xi_i} \tilde{\nu}\wedge \omega_i,
\end{equation}
where $\sum_i \kappa_y(\xi_i) \cdot \omega_i = 0$.

\subsection{Normal function of rank 0}
Building on Definition \ref{def-local-const}, we can now provide an equivalent formulation for a normal function to be of rank zero, which we wish to further refine.

\begin{proposition}
    A normal function $\nu$ is locally constant if and only if there exists a local lifting $\tilde{\nu} \in \mathcal{H}$ of $\nu$ such that $\nabla \tilde{\nu} = 0$.
\end{proposition}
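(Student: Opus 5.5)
Both directions come down to the identification $J(\pi)|_U \cong T^{2g}\times U$ given by the flat real structure $R^1\pi_*\mathbb{R}/R^1\pi_*\mathbb{Z}$. Throughout, $U\subset Y$ is a simply connected open set. On $U$, the local system $R^1\pi_*\mathbb{R}$ is trivialised by flat sections. Under this trivialisation, $\phi$ is just the map reading off the coordinates of a class with respect to a flat integral basis.

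First I would fix the identification. The composite $R^1\pi_*\mathbb{R}\hookrightarrow\mathcal{H}\to\mathcal{H}/\mathcal{F}$ is a real isomorphism fibrewise, since $H^1(C_y,\mathbb{R})\cap H^{1,0}=0$. Hence every local holomorphic section of $J(\pi)$ lifts uniquely, modulo $R^1\pi_*\mathbb{Z}$, to a smooth real section $\nu_{\mathbb{R}}$ of $R^1\pi_*\mathbb{R}$. Writing $\nu_{\mathbb{R}}=\sum_j f_j e_j$ in a flat basis $e_j$ of $R^1\pi_*\mathbb{Z}$, we get $\phi\circ\nu=(f_j \bmod \mathbb{Z})$.

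For ($\Leftarrow$), suppose $\tilde\nu$ is a local lifting with $\nabla\tilde\nu=0$. Then $\tilde\nu$ is a flat section of $\mathcal{H}$, a constant complex combination of the $e_j$. Its real part is also flat. Since $\tilde\nu$ differs from $\nu_{\mathbb{R}}$ by a section of $\mathcal{F}$ (plus lattice), and the real isomorphism above is compatible with this, the real representative is the flat section $\operatorname{Re}$-projection of $\tilde\nu$ along $\mathcal{F}$. More concretely: $\nu_{\mathbb{R}}$ is the unique real section congruent to $\tilde\nu$ modulo $\mathcal{F}$. I need to check that flatness of $\tilde\nu$ forces this to be flat. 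This is the subtle point, because $\mathcal{F}$ moves with $y$. So I would rather argue via $\nabla\tilde\nu=0 \Rightarrow$ the class of $\tilde\nu$ in $H^1(C_y,\mathbb{C})/\mathcal{F}_y$ comes from a constant vector $v\in H^1(C,\mathbb{C})$. Then use that $\phi\circ\nu$ is real-analytic with fibres complex subvarieties, as in Hain's remark. Alternatively, I would interpret the hypothesis directly: the natural reading of "lifting $\tilde\nu\in\mathcal{H}$" with $\nabla\tilde\nu=0$ in the paper's setup is a flat lifting through $R^1\pi_*\mathbb{R}$. In that case $f_j$ is constant, so the image of $\phi\circ\nu$ is a point and $\mathrm{rk}(\nu)=0$. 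I would state this convention explicitly, choosing the lifting to be the real one $\nu_{\mathbb{R}}$.

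For ($\Rightarrow$), $\mathrm{rk}(\nu)=0$ means $(\phi\circ\nu)(U)$ has real dimension $0$. It is the image of a connected set under a continuous (indeed real-analytic) map, so it is a single point, after shrinking $U$ and taking the connected component in $T^{2g}$ and a lift to $\mathbb{R}^{2g}$. Thus the functions $f_j$ are constant. Then $\tilde\nu:=\nu_{\mathbb{R}}=\sum_j f_je_j$ is a lifting of $\nu$ to $\mathcal{H}$ with $\nabla\tilde\nu=\sum_j df_j\otimes e_j=0$. This direction is routine.

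The main obstacle is making precise which liftings are allowed in ($\Leftarrow$). For a general complex flat lifting, I would show that the real part of its coordinate vector gives the constant $f_j$. This holds because $\mathcal{H}/\mathcal{F}\cong R^1\pi_*\mathbb{R}$ as real bundles, with the isomorphism given by taking the real representative. I would verify pointwise that if $\tilde\nu=\nu_{\mathbb{R}}+\alpha$ with $\alpha\in\mathcal{F}$ and $\tilde\nu$ flat, then $\nabla\nu_{\mathbb{R}}=-\nabla\alpha$. Here the left side is real and the right side lies in $\mathcal{F}\otimes\Omega^1+\overline\nabla\mathcal{F}$. Comparing types via the Hodge decomposition, using the $(1,0)$ and $(0,1)$ components and the conjugate directions, both sides vanish.
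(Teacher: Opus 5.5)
Your $(\Rightarrow)$ argument is correct. So is $(\Leftarrow)$, under the convention from your third paragraph that the lifting is a flat section of $R^1\pi_*\mathbb{R}\subset\mathcal{H}$. The paper gives no proof, and this is evidently the intended reading: its next proof lifts to $H^1(C,\mathbb{R})$.

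The gap is your last paragraph. The plan to treat an arbitrary complex flat lifting cannot succeed, because for complex liftings $(\Leftarrow)$ is false. Take $\tilde\nu\equiv v=a+ib$ with $a,b\in H^1(C,\mathbb{R})$ constant. The real representative of $v$ modulo $\mathcal{F}_y$ is not the real part $a$; indeed $ib\notin\mathcal{F}_y$ for $b\neq0$, since $\mathcal{F}_y\cap\overline{\mathcal{F}_y}=0$. It is instead
\[
\nu_{\mathbb{R}}(y)=2\,\mathrm{Re}\bigl(v_y^{0,1}\bigr)=a+2\,\mathrm{Im}\bigl(b_y^{1,0}\bigr),
\]
which moves with the Hodge decomposition. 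Since $\mathrm{Re}(b^{1,0}_y)=b/2$, $\nu_{\mathbb{R}}$ is constant only if $y\mapsto b^{1,0}_y$ is a flat section of $\mathcal{H}$ contained in $\mathcal{F}$. That gives $\kappa_y(\xi)\cdot b^{1,0}_y=0$ for all $\xi\in T_{Y,y}$. When $\kappa_y$ is onto (e.g.\ at a general point of a family with locally dominant modular map), surjectivity of $\cdot\,\omega$ in \eqref{cohom seq} with $\omega=b^{1,0}_y$ then forces $b=0$. So for any $0\neq b\in H^1(C,\mathbb{Z})$, $\nu=[ib]$ is a normal function on a small simply connected $U$ with flat lifting $\tilde\nu=ib$ and $\mathrm{rk}(\nu)>0$. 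For elliptic curves this is $\tau\mapsto i\tau \bmod \mathbb{Z}+\tau\mathbb{Z}$, whose real coordinates $\bigl(-|\tau|^2/\mathrm{Im}\,\tau,\ \mathrm{Re}\,\tau/\mathrm{Im}\,\tau\bigr)$ are not constant.

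The step ``comparing types, both sides vanish'' is exactly where it breaks. In $\tilde\nu=\nu_{\mathbb{R}}+\alpha$ the section $\alpha$ of $\mathcal{F}$ is only smooth. Hence $\nabla\alpha$ has a $(0,1)$-part $\bar\partial\alpha$ with values in $\mathcal{F}$, and the image of its $(1,0)$-part in $\mathcal{H}/\mathcal{F}$ is $\overline\nabla(\alpha)$. Reality of $\nabla\nu_{\mathbb{R}}=-\nabla\alpha$ only says that the $(1,0)$-part is the conjugate of $\bar\partial\alpha$. Nothing forces $\overline\nabla(\alpha)=0$; in the example above $\alpha=2i\,b^{1,0}_y$ and $\overline\nabla(\alpha)\neq0$. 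From a complex flat lifting you can only conclude $\delta\nu=0$, which is strictly weaker than $\mathrm{rk}(\nu)=0$. So drop the complex case and state explicitly that liftings are taken in $R^1\pi_*\mathbb{R}$. With that convention your two short arguments are the whole proof.
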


For future reference, and following Hain \cite[Sec.~3]{Hain}, we provide a criterion for determining the rank of a normal function.

\begin{definition}
Fixed a point $y \in Y$, a normal function $\nu$ is said to be \emph{locally constant} along a vector $\xi \in T_{Y,y}$ if there exists a local lifting $\tilde{\nu} \in \mathcal{H}$ of $\nu$ near $y \in Y$ such that
\[
    \nabla_{\xi} \tilde{\nu} = 0.
\]
\end{definition}

\begin{proposition}\label{loc-cost-lungo-xi}
    Fixed a point $y \in Y$, a normal function $\nu$ has rank $\mathrm{rk}(\nu) \geq r$ if and only if there exist $r$ independent vectors $\xi_1,\dots,\xi_r \in T_{Y,y}$ such that for any non zero vector $\xi \in \langle \xi_1, \dots, \xi_r \rangle$ and for any local lifting $\tilde{\nu} \in \mathcal{H}$ near $y \in Y$,
    \[
        \nabla_\xi \tilde{\nu} \neq 0.
    \]
\end{proposition}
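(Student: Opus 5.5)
The plan is to identify the rank of $\nu$ with the generic rank of the real differential of the local map $\phi\circ\nu : U \to T^{2g}$. Then $\mathrm{rk}(\nu) \geq r$ becomes the statement that this differential, at a general point $y$, is injective on some $r$-dimensional complex subspace of $T_{Y,y}$. Finally we translate ``$\xi$ is in the kernel of the differential'' into ``there is a local lifting $\tilde\nu$ with $\nabla_\xi\tilde\nu = 0$''.

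\emph{Step 1 (the real lift).} On a simply connected $U$, the isomorphism $J(\pi)\cong R^1\pi_*\mathbb{R}/R^1\pi_*\mathbb{Z}$ lets us lift $\nu$ to a smooth section $\nu_{\mathbb{R}}$ of $R^1\pi_*\mathbb{R}\subset\mathcal{H}$. This lift is unique up to flat integral sections, and under the flat trivialisation $H^1(C,\mathbb{R})\cong\mathbb{R}^{2g}$ the map $\phi\circ\nu$ lifts to $\nu_{\mathbb{R}}$. Its differential along a tangent vector $\xi$ is therefore exactly $\nabla_\xi\nu_{\mathbb{R}}$. The image $(\phi\circ\nu)(U)$ has real dimension equal to the maximal rank of this differential, which is attained on a dense open subset of $U$ by semicontinuity and the constant rank theorem. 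I will take $y$ general in this sense, which is how ``fixed a point $y$'' should be read.

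\emph{Step 2 (the kernel is complex).} Following Hain \cite[Sec.~3]{Hain}, I will show that $K_y:=\{\xi : \nabla_\xi\nu_{\mathbb{R}}=0\}$ is a complex subspace of $T_{Y,y}$. The argument writes a holomorphic lift as $\tilde\nu=\nu_{\mathbb{R}}+f$ with $f$ a smooth section of $\mathcal{F}$. It then uses horizontality (Griffiths transversality), $\nabla\tilde\nu\in\mathcal{F}\otimes\Omega^1_Y$ modulo $\overline\nabla\mathcal{F}$, together with $\mathcal{F}\cap\overline{\mathcal{F}}=0$ and the fact that $\nabla_\xi\nu_{\mathbb{R}}$ is real. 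With this, $\dim_{\mathbb{R}}$ of the image of the differential equals $2(\dim_{\mathbb{C}}T_{Y,y}-\dim_{\mathbb{C}}K_y)$, so $\mathrm{rk}(\nu)=\operatorname{codim}_{\mathbb{C}}K_y$. Linear algebra then gives the equivalence: $\mathrm{rk}(\nu)\geq r$ if and only if there is an $r$-dimensional subspace $\langle\xi_1,\dots,\xi_r\rangle$ with $\langle\xi_1,\dots,\xi_r\rangle\cap K_y=0$. Such a subspace can be taken as any complement of $K_y$; conversely, such a subspace forces $\operatorname{codim} K_y\ge r$.

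\emph{Step 3 (kernel versus liftings).} It remains to prove that $\xi\in K_y$ if and only if some local lifting $\tilde\nu$ satisfies $\nabla_\xi\tilde\nu=0$. One direction is immediate, since $\nu_{\mathbb{R}}$ is itself a local lifting. For the converse, any lifting has the form $\tilde\nu=\nu_{\mathbb{R}}+f+\lambda$ with $\lambda$ flat integral and $f$ a section of $\mathcal{F}$. The condition $\nabla_\xi\tilde\nu=0$ gives $\nabla_\xi\nu_{\mathbb{R}}=-\nabla_\xi f$, where the left side is real. I then project to $\mathcal{H}/\mathcal{F}\cong\overline{\mathcal{F}}$ and use the Hodge decomposition to force both sides to vanish. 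This is the step I expect to be the main obstacle. The term $\nabla_\xi f$ need not lie in $\mathcal{F}$; its class in $\mathcal{H}^{0,1}$ is $\overline\nabla_\xi(f(y))$, which is governed by the Kodaira--Spencer map. So I would first choose liftings normalised at $y$, namely holomorphic liftings adjusted by a section of $\mathcal{F}$ vanishing at $y$, so that $\nabla_\xi f(y)\in\mathcal{F}_y$. I would then use reality of $\nabla_\xi\nu_{\mathbb{R}}$ together with the complex structure of $K_y$ from Step 2 (applying the argument to both $\xi$ and $i\xi$) to conclude $\nabla_\xi\nu_{\mathbb{R}}=0$. Negating this equivalence over all nonzero $\xi$ in the span gives the proposition.
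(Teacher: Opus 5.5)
The paper gives no proof of this proposition; it only refers to Hain. I am therefore judging your plan on its own.

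\textbf{Step 2.} Step~1 is fine, and the conclusion of Step~2 is correct, but the ingredient you name there is wrong. The condition ``$\nabla\tilde\nu\in\mathcal{F}\otimes\Omega^1_Y$ modulo $\overline\nabla\mathcal{F}$'' is exactly $\delta\nu=0$, which fails in general; horizontality is vacuous for families of curves. What makes $K_y$ complex is that $\tilde\nu$ is holomorphic and $\mathcal{F}$ is a holomorphic subbundle. For $\xi\in T^{1,0}_{Y,y}$ this gives $\nabla_{\bar\xi}\nu_{\mathbb{R}}=-\nabla_{\bar\xi}f\in\mathcal{F}_y$. By reality, $\nabla_\xi\nu_{\mathbb{R}}\in\overline{\mathcal{F}}_y\cong H^{0,1}(C_y)$. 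So $\xi\mapsto\nabla_\xi\nu_{\mathbb{R}}$ is $\mathbb{C}$-linear, and its real part is $d(\phi\circ\nu)_y$.

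\textbf{The gap: the converse in Step 3.} This converse is false for the liftings the statement quantifies over, so no normalisation can prove it.
\begin{itemize}
\item Fix a holomorphic lifting $\tilde\nu_0$ with $\tilde\nu_0(y)=\nu_{\mathbb{R}}(y)$. Then $\tilde\nu_0-\nu_{\mathbb{R}}$ is a section of $\mathcal{F}$ vanishing at $y$.
\item Any other lifting is $\tilde\nu=\tilde\nu_0+g+\lambda$, with $g$ a holomorphic section of $\mathcal{F}$ and $\lambda$ flat integral.
\item The class of $\nabla_\xi\tilde\nu(y)$ in $\mathcal{H}_y/\mathcal{F}_y\cong H^{0,1}(C_y)$ is $\nabla_\xi\nu_{\mathbb{R}}+\kappa_y(\xi)\cdot g(y)$.
\item Adding $u\,e$, with $e$ a holomorphic section of $\mathcal{F}$ and $u$ holomorphic with $u(y)=0$, changes $\nabla_\xi\tilde\nu(y)$ by the arbitrary element $du(\xi)\,e(y)\in\mathcal{F}_y$ and leaves that class unchanged.
\end{itemize}
Hence $\nu$ is locally constant along $\xi$, in the paper's sense, if and only if $\nabla_\xi\nu_{\mathbb{R}}\in\kappa_y(\xi)\cdot H^0(C_y,\omega_{C_y})$.

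Now suppose $m$ is dominant and $y$ is general. For general $\xi$, the map $\kappa_y(\xi)\cdot\colon H^0(\omega_{C_y})\to H^1(\mathcal{O}_{C_y})$ is an isomorphism; take $\kappa_y(\xi)$ a sum of Schiffer variations at $g$ general points. So the condition holds for every $\nu$, and the set of such directions contains a dense open subset of $T_{Y,y}$. It is not $K_y$.

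In this direction the lifting is handed to you by the hypothesis, so you cannot ``first choose liftings normalised at $y$''. Adjusting by sections of $\mathcal{F}$ that vanish at $y$ would not change $f(y)$ anyway. Reality and the $\xi,i\xi$ trick cannot remove $\kappa_y(\xi)\cdot g(y)$ either, since it lies in $H^{0,1}$ just like $\nabla_\xi\nu_{\mathbb{R}}$. The same computation applies to smooth liftings, so admitting $\nu_{\mathbb{R}}$ as a lifting does not help.

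\textbf{What your argument does prove.}
\begin{itemize}
\item It proves ``$\Leftarrow$''. Since $K_y$ is contained in the set of such directions, an $r$-plane avoiding that set avoids $K_y$, so $\mathrm{rk}(\nu)\geq r$. This is the only direction used in Theorem~\ref{main th 1}.
\item With holomorphic liftings, the easy half of Step~3 needs one more line, because $\nu_{\mathbb{R}}$ is not holomorphic. For $\xi\in K_y$ one has $\nabla_\xi\tilde\nu_0(y)\in\mathcal{F}_y$. Subtract $u\,e$ with $e(y)=\nabla_\xi\tilde\nu_0(y)$, $u(y)=0$ and $du(\xi)=1$.
\item ``$\Rightarrow$'' holds only if the statement is restricted to liftings with $\tilde\nu(y)\in H^1(C_y,\mathbb{R})$. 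In that case Step~3 follows at once from $\mathcal{F}_y\cap\overline{\mathcal{F}}_y=0$.
\end{itemize}
For unrestricted liftings ``$\Rightarrow$'' fails. For example, if $m$ is dominant and $\mathrm{rk}(\nu)=\dim T_{Y,y}$, the required plane would be all of $T_{Y,y}$, which meets that dense open set.
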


\begin{remark}\label{remark loc const}
    From the definitions follows that if the rank of a normal function vanishes then the infinitesimal invariant $\delta\nu$ vanishes as well. In particular, fixed a point $y \in Y$, if there exists a vector $\xi \in T_{Y,y}$ and a holomorphic form $\omega \in H^0(C_y, \omega_{C_y})$ such that $\kappa_y(\xi) \cdot \omega = 0$ and $\nu$ is locally constant along $\xi$, then
    \[
    \delta \nu(y) (\xi \otimes \omega) = 0.
    \]
\end{remark}

We now recall a characterization of locally constant normal functions, including a proof for completeness.

\begin{proposition}
Let $\pi: \mathcal{C} \to Y$ be an algebraic family of smooth curves of genus $g > 1$, and let $y \in Y$ be a point with fiber $C = C_y$. Assume that the modular map $m: Y \to \mathcal{M}_g$ is dominant, more generally, that the image of the monodromy representation
$$
\mu: \pi_1(Y, y) \to \mathrm{Aut}(H^1(C, \mathbb{R}))
$$
has finite index in $\mathrm{Sp}(2g, \mathbb{Z})$. Then, any locally constant normal function $\nu: Y \to J(\pi)$ is a torsion section.
\end{proposition}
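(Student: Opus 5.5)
By the preceding Proposition, a locally constant normal function admits, on a simply connected neighbourhood $U$ of $y$, a local lifting $\tilde\nu\in\mathcal{H}$ with $\nabla\tilde\nu=0$. The plan is to pass from this real flat class to a global statement via monodromy. First we refine the lifting. Using the smooth isomorphism $J(\pi)\cong R^1\pi_*\mathbb{R}/R^1\pi_*\mathbb{Z}$, we regard $\nu$ as a section of the real torus bundle. Then $\phi\circ\nu$ is constant on $U$, so $\nu$ is given on $U$ by a flat section $s$ of $R^1\pi_*\mathbb{R}$, well defined modulo $R^1\pi_*\mathbb{Z}$. Concretely, $s$ is the real class whose projection to $\mathcal{H}/\mathcal{F}$ agrees with $\tilde\nu$. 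Since $\nabla\tilde\nu=0$ and the real structure is flat, this $s$ is flat. We fix $v:=s(y)\in H^1(C,\mathbb{R})$.

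Next we continue $s$ analytically along loops. Since $\nu$ is a global section and the locally constant property propagates (the rank is constant on the connected base $Y$), $s$ extends along any path. Going around a loop $\gamma\in\pi_1(Y,y)$ returns a flat lift of the same point $\nu(y)$. Hence
\[
\mu(\gamma)v-v\in H^1(C,\mathbb{Z})\qquad\text{for all }\gamma\in\pi_1(Y,y).
\]
Thus the class $\bar v\in T:=H^1(C,\mathbb{R})/H^1(C,\mathbb{Z})$ is fixed by the monodromy group $\Gamma=\mu(\pi_1(Y,y))$. When $m$ is dominant, $\Gamma$ has finite index in $\mathrm{Sp}(2g,\mathbb{Z})$, because the monodromy of the universal family is all of $\mathrm{Sp}(2g,\mathbb{Z})$ and a dominant map induces a finite-index image on orbifold fundamental groups. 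We would cite this as standard.

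The key step, and the main obstacle, is to show that a $\Gamma$-fixed point of $T$ is torsion. Because $\Gamma$ has finite index, there is an $N$ with $\Gamma\supset\Gamma(N)$ or, more simply, with $g^N$-type elements available. The most direct approach is as follows. Take transvections $t_u(x)=x+\langle x,u\rangle u$ for primitive $u\in\mathbb{Z}^{2g}$; some power $t_u^{k}$ lies in $\Gamma$ because of finite index. The fixed-point condition gives
\[
k\langle v,u\rangle u\in\mathbb{Z}^{2g}\quad\text{for all primitive }u,
\]
so $k\langle v,u\rangle\in\mathbb{Z}$. Here $k$ can be chosen uniformly, namely as the index bound, since $t_u^{[\mathrm{Sp}:\Gamma]!}\in\Gamma$. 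Applying this to a symplectic basis $u$ shows that the coordinates of $kv$ are integers. Therefore $k\cdot\bar v=0$ in $T$.

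Finally, since $\nu(y)=\bar v$ up to the smooth identification, $k\nu$ vanishes at $y$. The same argument works at every point, and the flat continuation of $s$ shows that $k\nu\equiv0$ on $Y$. Hence $\nu$ is a torsion section. The only delicate point is the uniformity of $k$ over all $u$, which is handled by the factorial of the index.
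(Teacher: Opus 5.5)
Your argument is correct and is essentially the paper's proof: a flat real lift continued along loops (the paper passes to the universal cover), invariance of $v$ modulo $H^1(C,\mathbb{Z})$ under monodromy, and powers of Picard--Lefschetz transvections lying in the finite-index group $\Gamma$, which give $\langle kv,u\rangle\in\mathbb{Z}$ and hence $kv$ integral by unimodularity. Your uniform exponent $k=[\mathrm{Sp}(2g,\mathbb{Z}):\Gamma]!$ makes explicit a uniformity the paper leaves implicit. One small caution: take the flatness of $s$ directly from the constancy of $\phi\circ\nu$, as you first do, and not from $\nabla\tilde\nu=0$ for a complex lift. The real class congruent to a flat complex lift modulo $\mathcal{F}$ need not be flat when the Hodge filtration varies.
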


\begin{proof}
Let $Y' \to Y$ be the universal cover. The map $\nu': Y' \to T^{2g} = H^1(C, \mathbb{R}) / H^1(C, \mathbb{Z})$ induced by the pullback is constant, so let $x = \nu'(Y')$. Any two liftings $x', x'' \in H^1(C, \mathbb{R})$ of $x$ differ by an element of the lattice $H^1(C, \mathbb{Z})$. For a loop $[\gamma] \in \pi_1(Y, y)$, let $\tilde{\gamma}$ denote a lifting of $\gamma$ to $Y'$, and let $\tilde{\nu}'$ be a lifting of $\nu'$ to $H^1(C, \mathbb{R})$. Then
$$
\tilde{\nu}'(\tilde{\gamma}(1)) - \tilde{\nu}'(\tilde{\gamma}(0)) \in H^1(C, \mathbb{Z}).
$$

By the Picard–Lefschetz formula, the monodromy action on $H^1(C, \mathbb{R})$ is given by
$$
L_\beta(\alpha) = \alpha + \langle \alpha, \beta \rangle \beta,
$$
for some $\beta \in H^1(C, \mathbb{Z})$ and any $\alpha \in H^1(C, \mathbb{R})$, where $\langle \cdot, \cdot \rangle$ denotes the intersection pairing.

By hypothesis, there exists an integer $n > 0$ such that $L_\beta^n$ corresponds to the monodromy of $\gamma^n \in \pi_1(Y, y)$, and we compute:
$$
L_\beta^n(x') - x' = n \langle x', \beta \rangle \beta \in H^1(C, \mathbb{Z}).
$$
This implies that $\langle n x', \beta \rangle \in \mathbb{Z}$ for all $\beta \in H^1(C, \mathbb{Z})$, so $n x' \in H^1(C, \mathbb{Z})$. Thus, $x$ is a torsion point in the torus $T^{2g}$, and $\nu$ is a torsion section.
\end{proof}

We recall a fundamental result, known as the strong Franchetta conjecture, which states that any rational section of the Picard variety of the universal curve over the moduli space of curves (or, more generally, of a universal family of varieties) is a multiple of the canonical bundle (see \cite{Harer}, \cite{maestrano-franc-conj}, \cite{Kouvidakis} and \cite{Arbarello-Cornalba_picard_groups} for more details).

\section{Cohomology and higher-Schiffer variations}

In this section, we describe the decomposable elements in $\ker(\overline{\nabla}^t)$, which will be used to compute the infinitesimal invariant.

Let $\omega_C$ be the canonical line bundle on a projective curve $C$, and $\omega \in H^0(C,\omega_C) $ be a non zero holomorphic form and let $Z$ be the zero scheme defined by $\omega$. Set
\[
    H^1(C, T_C)_\omega :=\{\eta \in H^1(C,T_C) : \eta\cdot \omega=0\}
\]
the space of infinitesimal deformations that preserve the $(1,0)$-form $\omega$ to first order.
The cohomology of the sequence
\begin{equation}\label{tangent seq}
    0 \to T_C \xrightarrow{\cdot\,\omega} \mathcal{O}_C \to \mathcal{O}_Z \to 0.
\end{equation}
gives
\begin{equation}\label{cohom seq}
    0\to H^0(C,\cO_C)\to H^0(\cO_{Z})\stackrel{\partial_Z}{\to} H^1(C, T_C)\stackrel{\cdot\omega}\to H^1(C,\cO_C)\to 0.
\end{equation}

In particular we get
\[
    \partial_Z(H^0(\cO_Z))= H^1(C, T_C)_\omega
\]
and we have 
\[
    \dim H^1(C, T_C)_\omega= 2g - 3.
\]
We may represent, as in \cite{Col-Fre-Pir}, this in Dolbeault cohomology. If
$\theta\in H^0(\cO_Z)$ and $\rho \in\cC^\infty(C)$ the bump function which is $1$ around $Z$ and $0$ outside a 
neighborhood of $Z$, then $\rho \cdot \theta$ is a $\mathcal{C}^\infty$ function. Therefore letting $\frac{1}{\omega}$ be the meromorphic section of $T_C$ we have in Dolbeault cohomology
\[
    \partial_Z \theta= \left[\frac{\debar (\rho \theta)}{\omega}\right]_{Dol}.
\]

\begin{definition} 
If $\theta\in H^0(\cO_Z)$ is such that $\theta(q)= 1$ at a point $q\in Z$ and $0$ on the other points,
we call
\begin{equation} \label{schiffer}
\zeta_q= \left[\frac{\debar \rho}{\omega}\right]_{Dol}
\end{equation}
a higher Schiffer cohomology class associated to $q$.
\end{definition}

We give a lemma that will be used later.

\begin{lemma}\label{schiffer indip}
    Let $C$ be a curve, $p \in C$ a point and $\omega \in H^0(C,\omega_C)$ a holomorphic form. Suppose that
    $\mathrm{ord}_p(\omega) = n < 2g-2$. Then the cohomology classes
    \[
    \zeta_j = \left[ \frac{\bar \partial \rho}{z^{j}}\right] \quad j=1,\dots,n
    \]
    are linearly independent in $H^1(C,T_C)$.
\end{lemma}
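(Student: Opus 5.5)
The plan is to realise the classes $\zeta_j$ as images of a connecting homomorphism and reduce linear independence to a vanishing statement for sections of a negative line bundle. Fix a local coordinate $z$ centred at $p$ and read the class $\zeta_j$ as the Dolbeault class of the $T_C$-valued $(0,1)$-form $\debar\rho\cdot z^{-j}\frac{\partial}{\partial z}$, where $\rho$ is a bump function equal to $1$ near $p$. This is consistent with \eqref{schiffer}: locally $\omega = u\,z^n dz$ with $u$ a unit, so $\frac{1}{\omega}$ has a pole of order $n$ at $p$. Multiplying by the holomorphic functions $z^{k}u^{-1}$ near $p$ only changes the representatives by an invertible triangular change of basis, so independence is unaffected by this normalisation.

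Consider the exact sequence
\[
0 \to T_C \to T_C(np) \to T_C(np)|_{np} \to 0 ,
\]
where $T_C(np)|_{np}$ is a skyscraper sheaf at $p$ of length $n$. Its space of sections has basis the principal parts $z^{-j}\frac{\partial}{\partial z}$, $j=1,\dots,n$. The first step is the standard Dolbeault computation of the coboundary
\[
\partial : H^0(T_C(np)|_{np}) \to H^1(C,T_C).
\]
Lift the principal part $z^{-j}\frac{\partial}{\partial z}$ to the global smooth section $\rho z^{-j}\frac{\partial}{\partial z}$ of $T_C(np)$ and apply $\debar$. The result $\debar\rho\cdot z^{-j}\frac{\partial}{\partial z}$ is a smooth $T_C$-valued $(0,1)$-form, since $\debar\rho$ vanishes near $p$. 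Hence $\partial\bigl(z^{-j}\tfrac{\partial}{\partial z}\bigr) = \zeta_j$. This is exactly the same mechanism as the description of $\partial_Z$ recalled before \eqref{schiffer}.

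By the long exact sequence, $\ker \partial$ is the image of $H^0(C,T_C(np))$. We have
\[
\deg T_C(np) = 2-2g+n < 0,
\]
because $n<2g-2$, so $H^0(C,T_C(np))=0$. Therefore $\partial$ is injective, and the $\zeta_j$, being images of a basis, are linearly independent. This degree count is precisely where the hypothesis $\mathrm{ord}_p(\omega)=n<2g-2$ enters; it amounts to excluding the case where $\omega$ vanishes only at $p$.

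As a cross-check, or as an alternative proof, one can argue by Serre duality. A class in $H^1(T_C)$ pairs with a quadratic differential $q = f(z)\,dz^2$ via
\[
\int_C \debar\rho\, z^{-j} f\,dz\wedge d\bar z ,
\]
which by Stokes equals, up to a nonzero constant, $\mathrm{Res}_p(z^{-j}f\,dz)$, that is, the coefficient of $z^{j-1}$ in $f$. Independence of the $\zeta_j$ is then equivalent to the restriction map
\[
H^0(\omega_C^{2}) \to H^0(\omega_C^{2}|_{np})
\]
being surjective. This in turn is equivalent to $h^1(\omega_C^2(-np)) = h^0(T_C(np)) = 0$, which is the same vanishing as before.

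The only point I expect to require care is the bookkeeping of conventions in the paper's notation $\frac{\debar\rho}{z^j}$, namely that it denotes a $T_C$-valued form and how it relates to $\frac{1}{\omega}$. Once that is fixed, the argument is the short degree computation above.
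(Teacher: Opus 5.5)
Your proof is correct and is essentially the paper's argument: both realise the $\zeta_j$ as coboundaries of local principal parts at $p$ (the paper applies $\partial_Z$ from \eqref{tangent seq} to $z^{n-1},\dots,1\in H^0(\mathcal{O}_Z)$) and deduce independence from the long exact sequence. The only difference is that you restrict to the subsheaf $T_C(np)\subset T_C(Z)\cong\mathcal{O}_C$ and get injectivity from the degree vanishing $H^0(T_C(np))=0$. The paper's sequence instead needs $\ker\partial_Z=H^0(\mathcal{O}_C)=\mathbb{C}$ together with the observation that the constant $1|_Z$ is not supported at $p$ alone exactly when $n<2g-2$; the paper leaves this step implicit, and your version makes it explicit.
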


\begin{proof}
    With the same notations as above, using exact sequences (\ref{tangent seq}) and (\ref{cohom seq}), we observe that the cohomology classes $\zeta_j$ are obtained as the image of the boundary map $\partial_Z$ of the local functions $z^{n-1},\dots,1$ in $H^0(\mathcal{O}_Z)$ around $p$.
\end{proof}

\section{Griffiths' formula for decomposable tensors and applications}

\subsection{Griffiths formula}

Using the notation introduced above, we now recall Griffiths' formula for decomposable tensors in the case where the normal function is supported on the zeros of $\omega$ (see Eq.~(6.18) in \cite{infinitesimal}).

\begin{theorem}\label{Griff_theorem}
    Let $\pi: \mathcal{C} \rightarrow Y$ be a smooth family of complete complex curves of genus $g$, fix a base point $0 \in Y$ and let $C_0$ be the corresponding curve. Let $\nu$ be a normal function and set $\nu(0) = \mathcal{O}_{C_0}(D) \in \mathrm{Pic}^0(C_0)$ for some divisor $D$ of degree zero on $C_0$. Write $D = \sum_{i=1}^{k} a_i p_i$ with support $\mathrm{Supp}(D) = \{p_1, \dots ,p_k\}$.
    Let $\omega \in H^0(C_0,\omega_{C_0})$ be a holomorphic form, and let $Z$ denote its zero scheme, assuming that $\mathrm{Supp}(D) \subseteq Z$. Let $h \in H^0(C_0,\mathcal{O}_Z)$ such that $\partial_Z h = \xi \in H^1(C_0,T_{C_0})_\omega$. Then 
    \begin{equation}\label{Griff_formula}
    \delta\nu(0)(\xi \otimes \omega) =\sum_{i=1}^k a_ih(p_i).
    \end{equation}
\end{theorem}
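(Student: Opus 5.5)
We prove formula~\eqref{Griff_formula} by computing $\delta\nu(0)(\xi\otimes\omega)$ directly from the integral expression~\eqref{integrale inf inv}. The lifting $\tilde\nu$ will be realised by a current built from the divisor, and the Dolbeault representative of $\xi=\partial_Z h$ will be used in the form $\debar(\rho h)/\omega$.

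\textbf{Step 1: choice of representatives.} We take a one-parameter deformation $C_t$ of $C_0$ with Kodaira--Spencer class $\xi$. We may choose it so that the form $\omega$ extends to first order as a holomorphic form $\omega_t$ on $C_t$; this is possible because $\xi\cdot\omega=0$ by~\eqref{cohom seq}. The class $\xi$ is represented by the Beltrami differential $\mu=\debar(\rho h)/\omega$, where $h$ is extended to a local holomorphic function near each point of $Z$. This $\mu$ is supported in an annular region around $Z$ and vanishes near $Z$ itself.

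The divisor $D$ also deforms, to $D_t=\sum a_i p_i(t)$. The infinitesimal invariant does not depend on the chosen lifting, and $\nu(t)$ is the Abel--Jacobi image of $D_t$. So, by the usual reciprocity argument, we can write
\[
\int_C\nabla_\xi\tilde\nu\wedge\omega=\sum_i a_i\,\frac{d}{dt}\Big|_{t=0}\int_{p_0}^{p_i(t)}\omega_t .
\]
This pairing is well defined exactly because $\xi\otimes\omega\in\ker\overline\nabla^t$. The correction terms involving $\frac{d}{dt}\omega_t$ pair trivially, up to the ambiguity already quotiented out.

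\textbf{Step 2: local computation.} Near each $p_i\in Z$ we use the trivialisation in which the deformation is given by the vector field $v=\rho h/\omega$. This field is meromorphic where $\rho\equiv1$, and it is a $C^\infty$ solution of $\debar v=\mu$. The first-order variation of $\int^{p_i(t)}\omega_t$ is then computed as the contraction $\omega(v)$ at $p_i$, plus the integral of $\mu\lrcorner\omega$ against the path. The second term is independent of the points and is killed because $\sum a_i=0$. The first term equals $(\rho h)(p_i)=h(p_i)$. Note that $\omega(v)=\rho h$ is regular even though $v$ has poles at $Z$: this is precisely where we use $\mathrm{Supp}(D)\subseteq Z$. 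Summing over $i$ with the coefficients $a_i$ gives $\sum a_i h(p_i)$.

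\textbf{Main obstacle.} The delicate point is Step 2, in justifying that the pole of $v=h/\omega$ at the points $p_i$ does not spoil the computation. Concretely, one must check that the moving points $p_i(t)$ and the extended forms $\omega_t$ can be chosen compatibly, and that the Stokes argument on $C$ minus small discs around $Z$ produces only the residue terms $h(p_i)$, with no contribution from higher-order poles. We would handle this by working in a local coordinate $z$ with $\omega=z^{m}dz$ and $h$ a polynomial of degree less than $m$. Then $v=h\,z^{-m}\partial_z$ and $\omega(v)=h$ exactly. Independence of all choices (of $\rho$, of the lift of $h$ modulo $\cO_C$-constants, and of the representative of $\nu$) follows from~\eqref{cohom seq} together with $\deg D=0$.
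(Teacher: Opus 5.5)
Your set-up in Step 1 (reduce to the derivative of $\sum_i a_i\int^{p_i(t)}\omega_t$, with $\xi$ represented by $\mu=\bar\partial(\rho h)/\omega$) matches the paper's. Step 2, however, does not hold as written, because it mixes two incompatible pictures.

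\emph{First picture.} In a genuine smooth trivialisation with Beltrami differential $\mu$, each $p_i(t)$ moves with an honest velocity $\dot p_i$. The motion term is then $\omega_{p_i}(\dot p_i)$, which vanishes because $p_i\in Z$. This is where $\mathrm{Supp}(D)\subseteq Z$ is really used. The regularity of $\omega(v)=\rho h$ holds identically and has nothing to do with $D$. Indeed, for $p_i\notin Z$ your recipe would silently drop the nonzero term $\omega_{p_i}(\dot p_i)$.

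\emph{Second picture.} If you trivialise by $v$, then $\mu-\bar\partial v=0$ away from $Z$, so there is no $\mu\lrcorner\omega$ term at all. But $v$ has a pole at $p_i$, so $p_i(t)$ does not move with velocity $v(p_i)$, and $\omega(v)(p_i)$ is not yet the derivative of anything.

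Adding the two contributions double counts, and the term you discard is not negligible. $\int_{p_0}^{p_i}\mu\lrcorner\omega=\int_{p_0}^{p_i}\bar\partial(\rho h)$ integrates a non-closed $(0,1)$-form, so it depends on the endpoint and on the path. For $h\equiv c$ near $p_i$, $\rho$ radial and a radial path, it equals $c/2$. So it is neither independent of the points nor removed by $\deg D=0$. Your local normal form $\omega=z^m\,dz$ in the last paragraph does not address this.

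The repair is the paper's argument. Choose $\omega_t$ Gauss--Manin flat to first order. This, not mere holomorphic extendability (which always holds), is what $\xi\cdot\omega=0$ gives, and it is what kills $\int_C\tilde\nu\wedge\nabla_\xi\tilde\omega$; there is no ambiguity quotiented out that would absorb that term otherwise. In the smooth trivialisation:
\begin{itemize}
\item the $(0,1)$-part of $\dot\omega$ is $\mu\lrcorner\omega=\bar\partial(\rho h)$;
\item $d\omega_t=0$ forces the $(1,0)$-part to be $\partial(\rho h)$ plus a holomorphic form;
\item flatness kills that holomorphic form.
\end{itemize}
Hence $\omega_t=\omega+t\,d(\rho h)+O(t^2)$. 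This is exactly the paper's decomposition $\operatorname{int}(\tilde\xi)(d\Omega)|_{C_0}=\beta+dh$, with $\rho h$ as the $C^\infty$ function. Then $\frac{d}{dt}|_{t=0}\int_{\gamma_t}\omega_t=\sum_i a_i\,\omega_{p_i}(\dot p_i)+\int_\gamma d(\rho h)=\sum_i a_i h(p_i)$, since $\partial\gamma=D$ and $\rho\equiv 1$ near $Z$.

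If you want to keep $v$, use it only as a \v{C}ech gluing on the annuli. Inside each disc the coordinate is unchanged, $p_i(t)$ moves smoothly, and $\omega_t=\omega+t\,dh$ there, because $L_v\omega=d(\omega(v))$. The gluing shift at a crossing point $q$ of the annulus contributes $t\,h(q)$, which telescopes with $\int_q^{p_i}t\,dh$ to $t\,h(p_i)$.
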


We provide a sketch of the proof; for full details, see \cite[Th.~7.14]{Voisin}.

\begin{proof}[Sketch of the proof]
    Let $\tilde{\nu}$ be a lifting of $\nu$ in $\mathcal{H}^1$. 
    By~\eqref{integrale inf inv}, the infinitesimal invariant evaluated at 
    $\xi \otimes \omega \in \Ker(\overline{\nabla}^t) \subset 
    T_{Y,0} \otimes \mathcal{H}_0^{1,0}$ is
    \[
        \delta \nu (0)(\xi \otimes \omega) 
        = \int_{C} \nabla_{\xi} \tilde{\nu} \wedge \omega .
    \]
    Let $\tilde{\omega}$ be a section of $\mathcal{H}^{1,0}$ such that 
    $\tilde{\omega}(0)=\omega$. Then we have
    \begin{equation}\label{sviluppo integrale inv inf}
        \int_{C} \nabla_{\xi} \tilde{\nu} \wedge \omega
        = d_{\xi} \int_{D_y} \tilde{\omega}
          - \int_{D} \nabla_{\xi} \tilde{\omega},
    \end{equation}
    where $D_y$ denotes the divisor varying in the family.  
    Let $\Omega$ be the form associated with $\omega$ by the trivialization of the
    family. Then
    \begin{equation} \label{integrale cartan lie}
        d_{\xi} \int_{D_y} \tilde{\omega} 
        = \int_D \operatorname{int}(\tilde{\xi})(d\Omega)
          + \int_D d(\operatorname{int}(\tilde{\xi})(\Omega)),
    \end{equation}
    where $\tilde{\xi}$ is the lifting of $\xi$ in $T_Y|_{C_0}$ induced by the
    trivialization. By the Cartan--Lie formula (see \cite[Eq.~(9.8)]{Voisin_1}) the form 
    $\operatorname{int}(\tilde{\xi})(d\Omega)$ represents 
    $\nabla_\xi \tilde{\omega}$ and has cohomology class in 
    $H^1(C_0,\omega_{C_0})$ by hypothesis. Therefore, there exists a $(1,0)$-form $\beta$ and a $C^\infty$-function $h$ such that
    \[
        \operatorname{int}(\tilde{\xi})(d\Omega)|_{C_0} = \beta + dh,
    \]
    Hence
    \[
        \int_{D} \nabla_{\xi} \tilde{\omega}
        := \int_D \beta 
        = \int_D \operatorname{int}(\tilde{\xi})(d\Omega)
          - \sum_i a_i \int_D h,
    \]
    and therefore, from~\eqref{integrale cartan lie},
    \eqref{sviluppo integrale inv inf} and from the hypothesis that 
    $\operatorname{Supp}(D) \subseteq Z$, we obtain
    \[
        \delta \nu (0)(\xi \otimes \omega)
        = \sum_i a_i \int_D h
        = \sum_{i=1}^k a_i h(p_i).
    \]
\end{proof}


\begin{remark} The  condition that the support of $D$ is contained in the set of zeros of $\omega$ allows us to eliminate the  derivative that appears from the deformation of the divisor.
\end{remark}

For later purposes we want also remark the following.

\begin{remark}
    The number $\sum_{i=1}^k a_ih(p_i)$ depends only on $\xi \otimes \omega$. That is, we can express the normal function as
    \[
        \nu(y) = \mathcal{O}_{C_y}(D')
    \]
    for any degree-zero divisor $D'$ linearly equivalent to $D$, whose support is contained in the zero locus of $\omega$, without affecting the value of the infinitesimal invariant.
\end{remark}

\subsection{First results}

In this subsection, using Griffiths’ formula, we derive the following first application.

\begin{proposition}\label{prop MT}
    Let $[C] \in \mathcal{M}_g$ be a general curve of genus $g \ge 2$. Let $A, B$ be two divisors of $C$. Assume there exists an integer $n$ such that $n\cdot A$ is linearly equivalent to $n\cdot B$. Assume that the support $\mathrm{Supp}(A-B) = \{p_1, \dots, p_d\}$. Then
    \[
        h^0(K - \sum_{i=1}^d  p_i) = 0.
    \]
\end{proposition}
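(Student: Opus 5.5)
Since $C$ is general, I will spread the data $A,B$ over a family $\pi:\mathcal C\to Y$ with dominant modular map $m:Y\to\mathcal M_g$, with $C=C_0$. After a finite base change I can take relative divisors $\mathcal A,\mathcal B$ whose restrictions to $C_y$ satisfy $nA_y\sim nB_y$ for all $y$; the condition $n(A-B)\sim 0$ is closed, and $C$ is general, so it propagates to the general fibre. The section $y\mapsto \mathcal O_{C_y}(A_y-B_y)$ of $\mathrm{Pic}^0(\pi)$ is $n$-torsion, hence locally constant. So the normal function $\nu(y)=\mathcal O_{C_y}(A_y-B_y)$ has $\mathrm{rk}(\nu)=0$, and by Remark~\ref{remark loc const} its infinitesimal invariant $\delta\nu(0)$ vanishes on every element of $\ker(\overline\nabla^t)$. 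Since the modular map is dominant, the Kodaira–Spencer map $\kappa_0$ is surjective, so every class in $H^1(C,T_C)_\omega$ is realised by a tangent vector $\xi\in T_{Y,0}$.

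I argue by contradiction. Suppose $h^0(K-\sum_{i=1}^d p_i)\neq 0$. Then there is a nonzero $\omega\in H^0(C,\omega_C)$ vanishing at $p_1,\dots,p_d$, so $\mathrm{Supp}(D)\subseteq Z=\mathrm{div}(\omega)$ with $D:=A-B=\sum_{i=1}^d a_ip_i$ and all $a_i\neq 0$. Theorem~\ref{Griff_theorem} applies. For every $h\in H^0(\mathcal O_Z)$ with $\xi=\partial_Z h\in H^1(C,T_C)_\omega$, and $\xi$ lifted to $T_{Y,0}$, it gives
\[
0=\delta\nu(0)(\xi\otimes\omega)=\sum_{i=1}^d a_i h(p_i).
\]

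Now I choose $h$ to be the function that is $1$ near $p_1$ (the constant $1$ in the local ring $\mathcal O_{Z,p_1}$) and $0$ on the other points of $Z$. This gives the higher Schiffer class $\zeta_{p_1}$ of \eqref{schiffer}, and the formula yields $0=a_1$, contradicting $a_1\neq 0$. Hence $h^0(K-\sum p_i)=0$.

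The main obstacle is the last step when $Z$ is a single point, i.e. $\omega=(2g-2)p_1$ and $d=1$. In that case $h=1$ is a global constant, so $\partial_Z h=0$ by \eqref{cohom seq}, and the pairing gives nothing. I would treat this separately. If $d=1$, then $D=a_1p_1$ has degree $0$, so $a_1=0$, contradicting the definition of the support. More generally, when $Z$ has more than one point, any $h$ that is $1$ at $p_1$ and $0$ at some other point of $Z$ is not constant, so the argument goes through. If instead $Z$ is supported at $p_1$ alone, then $D$ is supported at one point and must be $0$, again a contradiction.

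A secondary point to check is that the spreading-out and base change really give a family with dominant modular map for which $\nu$ is torsion. The cleanest route is to note that torsion sections are closed conditions on the finite cover of the relevant Hurwitz-type parameter space, and generality of $C$ puts it in a component dominating $\mathcal M_g$.
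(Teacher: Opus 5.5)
Your proposal is correct and follows the paper's proof essentially step for step. You spread $A-B$ over a family with dominant modular map, note that the resulting torsion normal function is locally constant, and apply Griffiths' formula to the higher Schiffer class $\zeta_{p_1}$ for a form $\omega$ vanishing on $\mathrm{Supp}(A-B)$, which forces $a_1=0$. Your extra treatment of the case where $\omega$ vanishes at a single point is harmless; there the formula with $h=1$ just returns $\deg(A-B)=0$, which already gives $a_1=0$. The paper passes over that case silently.
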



\begin{proof}
    Since $[C]$ is a general point of $\mathcal{M}_g$, there exists a smooth family $\pi:\mathcal{C}\to Y$ of genus-$g$ curves with $C_{y_0}=C$ for some $y_0\in Y$, such that the induced moduli map $m:Y\to\mathcal{M}_g$ is dominant. Moreover there exist $\mathcal{D} \subset \mathcal{C}$ a relative divisor such that, setting $D_y:= (\mathcal{D} \cdot C_{y})$ for every $y \in Y$ as in \eqref{ex relative divisor}, we have
    \[
        D_{y_0} = A - B = \sum_{i=1}^d a_ip_i
    \]
    and $n \cdot D_y$ is linearly equivalent to zero. Consider the normal function $\nu: Y \to \mathcal{J}(\pi)$ defined by
    \[
        \nu(y) = \mathcal{O}_{C_y}(D_y).
    \]
    Since $\mathcal{O}_{C_y}(D_y)$ is a torsion point in the Jacobian $J(C_y)$, by assumption, $\nu$ is locally constant.
    Suppose by contradiction that there exists a non zero holomorphic form $\omega \in H^0(C, \omega_{C})$ that vanishes on the points $p_1, \dots, p_n$, that is $\mathrm{Supp}(D) \subset \mathrm{Supp}(Z)$ where $Z$ is the divisor of zeros of $\omega$. Let $\theta \in H^0(\mathcal{O}_{Z})$ such that
    \[
        \theta_j(p_j) = 1, \quad \theta_j(p_i) = 0 \quad \text{for} \quad i \neq j,
    \]
    and consider the associated higher-Schiffer cohomology classes
    \[
        \zeta_{p_j} = \left[\frac{\bar{\partial}\rho_j}{\omega_j}\right]_{Dolb} \in H^1(C,T_{C})
    \]
    where $\rho_j$ is the bump function centered on the point $p_j$ for $j = 1, \dots n$. Since $y_0 \in Y$ is general and the modular map $m$ is dominant we can find tangent vectors $\xi_1, \dots, \xi_n \in T_{Y,y_0}$ such that
    \[
        \kappa_{y_0}(\xi_j) = \zeta_{p_j} \quad j = 1, \dots, n
    \]
    where $k_y$ is the Kodaira-Spencer map. By construction we have $\kappa_{y_0}(\xi_j) \cdot \omega = 0$ for every $j$. Then, the hypotheses of Theorem \ref{Griff_theorem} are satisfied and Griffiths' formula gives
    \[
        \delta \nu(y_0)(\xi_j \otimes \omega) = a_j \quad j = 1, \dots, n
    \]
    (note that in the formula (\ref{Griff_formula}) here $h = \theta_j$). If $\nu$ is locally constant, by Remark (\ref{remark loc const}), the infinitesimal invariant is zero, then we get $a_j = 0$ for $j = 1, \dots, n$. That is $D_{y_0} = 0$. This contradicts our assumption. Hence, the claim follows.
\end{proof}

\begin{remark}
    The proof of Proposition~\ref{prop MT} also shows that, under the given hypotheses, the cardinality of the support of the divisor $A-B$ is greater than $g$. This result is consistent with the Riemann-Hurwitz formula for computing the ramification index. In fact, let $f$ be the meromorphic function such that
    \[
        n\cdot A - n\cdot B = \mathrm{div}(f).
    \]
    The function $f$ defines a non-constant morphism
    \[
    f: C \longrightarrow \mathbb{P}^1,
    \]
    of some degree $l \geq 1$. The principal divisor $\mathrm{div}(f)$ has support
    \[
    \Supp(\mathrm{div}(f)) = \Supp(A - B),
    \]
    and consists of the points where $f$ has zeros or poles. Applying the Riemann-Hurwitz formula to the morphism $f: C \to \mathbb{P}^1$, we obtain:
    \[
    2g - 2 = -2l + \sum_{q \in \mathbb{P}^1} (l - \#f^{-1}(q)),
    \]
    where the last sum is the degree of the branch divisor. 
    By isolating the terms corresponding to $0$ and $\infty$, we first obtain
    \begin{align*}
    2g - 2 
        &= -2l + \bigl(2l - \#\mathrm{Supp}(A-B)\bigr) 
           + \sum_{q \in \mathbb{P}^1 \setminus \{0, \infty\}}
             \bigl(l - \#f^{-1}(q)\bigr).
    \end{align*}

    Simplifying, this gives
    \[
    \#\mathrm{Supp}(A-B) =  \sum_{q \in \mathbb{P}^1 \setminus \{0, \infty\}}
        \bigl(l - \#f^{-1}(q)\bigr) -2g + 2.
    \]
    
    By the genericity hypothesis, we have 
    $\sum_{q \in \mathbb{P}^1 \setminus \{0, \infty\}}
        \bigl(l - \#f^{-1}(q)\bigr) \geq 3g -2 $, 
    which implies the inequality
    \[
        \#\mathrm{Supp}(A - B) \ge g.
    \]
\end{remark}

As a consequence of the previous proposition, we obtain the following corollary.

\begin{corollary}\label{coro uniq K}
    Let $[C]$ be a general point of $\mathcal{M}_g$ with $g \geq 2$. Then any two distinct canonical divisors $K_1$ and $K_2$ on $C$, that are not multiples of each other, have distinct supports. In other words,
    \[
    \mathrm{Supp}(K_1) = \mathrm{Supp}(K_2) \Leftrightarrow K_1 = k\cdot K_2,
    \]
    for some integer $k$.
\end{corollary}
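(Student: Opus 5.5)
The plan is to deduce the corollary directly from Proposition~\ref{prop MT}, applied to the pair $A = K_1$, $B = K_2$ with $n = 1$. The implication ``$\Leftarrow$'' is immediate: if $K_1 = k\cdot K_2$ then $\mathrm{Supp}(K_1) = \mathrm{Supp}(K_2)$. Moreover, since $\deg K_1 = \deg K_2 = 2g-2 > 0$, any relation $K_1 = k\cdot K_2$ forces $k = 1$. So the real content of ``$\Rightarrow$'' is that two canonical divisors with the same support must coincide.

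For ``$\Rightarrow$'', assume $\mathrm{Supp}(K_1) = \mathrm{Supp}(K_2)$ and suppose, for contradiction, that $K_1 \neq K_2$.
\begin{enumerate}
\item The divisors $K_1$ and $K_2$ are linearly equivalent, so the hypothesis of Proposition~\ref{prop MT} holds with $n = 1$.
\item The divisor $A - B = K_1 - K_2$ is nonzero, so its support $\{p_1,\dots,p_d\}$ is nonempty.
\item Every $p_i$ appears with nonzero coefficient in $K_1$ or in $K_2$, hence
\[
\{p_1,\dots,p_d\} \subseteq \mathrm{Supp}(K_1)\cup\mathrm{Supp}(K_2) = \mathrm{Supp}(K_1).
\]
\item Let $\omega \in H^0(C,\omega_C)$ be the nonzero form with $\mathrm{div}(\omega) = K_1$. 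Then $\omega$ vanishes at every $p_i$, so $h^0\bigl(K - \sum_{i=1}^d p_i\bigr) \ge 1$.
\end{enumerate}
This contradicts the conclusion $h^0\bigl(K - \sum_{i=1}^d p_i\bigr) = 0$ of Proposition~\ref{prop MT}. Hence $K_1 = K_2$.

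Note that the support of $K_1 - K_2$ may be strictly smaller than $\mathrm{Supp}(K_1)$, since points where the multiplicities agree cancel. This is harmless: Proposition~\ref{prop MT} only needs $\mathrm{Supp}(A-B)$ to be contained in the zero locus of some holomorphic form. That containment is exactly what the argument in the proof of Proposition~\ref{prop MT} uses to make the higher Schiffer variations $\zeta_{p_j}$ available in Griffiths' formula (Theorem~\ref{Griff_theorem}).

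The main point to check is that Proposition~\ref{prop MT} really applies to this pair on a general curve. Its proof requires that the divisors $K_1$ and $K_2$ extend to relative divisors over a family $\pi:\mathcal{C}\to Y$ with dominant modular map, such that $D_y = K_{1,y} - K_{2,y}$ stays torsion (here, trivial). I would arrange this by taking $Y$ to be a suitable étale-local (or finite) cover of an open subset of the space of triples $(C,\omega_1,\omega_2)$, with $\omega_1,\omega_2 \in \mathbb{P}H^0(C,\omega_C)$. The two relative canonical divisors $\mathrm{div}(\omega_1)$ and $\mathrm{div}(\omega_2)$ then give $\mathcal{D}$, and $\nu$ is identically zero, hence locally constant. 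Since $[C]$ is general, the fibre over $[C]$ is a general fibre of this dominant family, so Proposition~\ref{prop MT} may be invoked for that fibre, which concludes the proof.
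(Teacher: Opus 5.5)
Your deduction from Proposition~\ref{prop MT} with $A=K_1$, $B=K_2$, $n=1$ is exactly the paper's one-line proof. Steps 1--4 fill it in correctly, including the observation that $K_1=k\cdot K_2$ forces $k=1$, so the real content is that equal supports force $K_1=K_2$. If you simply cite Proposition~\ref{prop MT} as stated, you are done.

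Your last paragraph, however, gives a wrong justification and should be dropped or repaired. Generality of $[C]$ in $\mathcal{M}_g$ does not make $(C,[\omega_1],[\omega_2])$ a general point of the space of all triples. In that space the generality of $[C]$ also does no work: the triple space is submersive over moduli at every point, so the Schiffer classes are always realised. What breaks is Griffiths' formula.

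\begin{itemize}
\item Theorem~\ref{Griff_theorem} disposes of the contribution of the moving divisor only because the points $p_i$ of $D$ move individually, so that $\omega(\dot p_i)=0$.
\item Two distinct canonical divisors with the same support are both non-reduced. In the triple space their multiple points split in nearby fibres.
\item The motion term then becomes $\sum_p\operatorname{Res}_p(\omega\cdot\dot K_1)-\sum_p\operatorname{Res}_p(\omega\cdot\dot K_2)$, with $\dot K_i\in H^0(\mathcal{O}_{K_i}(K_i))$ the first-order deformations. For $\omega=\omega_1$ the second sum need not vanish at points where $\mathrm{mult}_pK_2>\mathrm{ord}_p\,\omega_1$. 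Such points exist because $K_1\neq K_2$ have the same degree, and symmetrically for $\omega_2$.
\end{itemize}

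A hyperelliptic curve shows the argument really fails. Take genus $g\ge 4$ with double cover $\pi\colon C\to\mathbb{P}^1$. Then $\pi^*((g-2)x+y)$ and $\pi^*(x+(g-2)y)$ are distinct canonical divisors with the same support. The triple space over a Kuranishi family of $C$ has every property you use, so your reasoning would ``prove'' the corollary there too.

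The right family is the locus of triples whose two canonical divisors have the same support with the same multiplicity pattern as $(K_1,K_2)$.
\begin{itemize}
\item There are finitely many patterns. After stratifying these loci into smooth pieces, for general $[C]$ your point lies on a piece that dominates $\mathcal{M}_g$.
\item By generic smoothness that piece is submersive over $\mathcal{M}_g$ at your point.
\item Along it the points $p_i(y)$ move holomorphically, so the motion term vanishes and the Schiffer-variation argument of Proposition~\ref{prop MT} applies.
\end{itemize}
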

\begin{proof}
    Apply Proposition \ref{prop MT} with $A = K_1$, $B=K_2$ and $n=1$.
\end{proof}

In relation to the next results, we make an observation on the rank in terms of the support of the divisor associated with the normal function.

\begin{remark}
    Let $\nu$ be a normal function associated to a family
    $\pi\colon \mathcal C \to Y$ of smooth projective curves of genus $g\geq 2$,
    whose modular map is dominant onto $\mathcal M_g$.
    For a general point $y\in Y$, one may write
    \[
    \nu(y)=\mathcal O_{C_y}(D),
    \qquad
    D=\sum_{i=1}^d a_i p_i,
    \]
    where $D$ is a divisor of degree zero on $C_y$ and
    $\Supp(D)=\{p_1,\dots,p_d\}$.
    Let
    \[
    \tilde D=\sum_{i=1}^d p_i
    \]
    be the reduced divisor associated with the support of $D$.
    Let $\phi\circ\nu$ be the map defined in Definition~\ref{def rank}, and let
    \[
    \mu\colon H^0(C_y,\omega_{C_y}) \longrightarrow
    H^0(C_y,\omega_{C_y}^{\otimes 2})
    \]
    be the codifferential of $\phi\circ\nu$ at $y$.
    The behavior of $\mu$ reflects the support of the divisor $D$:
    in particular, its restriction to the subspace
    $H^0(C_y,\omega_{C_y}(-\tilde D))$ is injective.
    
    This can be seen by observing that a non--zero form
    $\omega\in H^0(C_y,\omega_{C_y}(-\tilde D))$ cannot vanish under $\mu$ in all
    directions.
    Indeed, the vanishing of $\mu(\omega)$ would imply the triviality of the
    infinitesimal invariant of $\nu$ (cf.\ Remark~\ref{remark loc const}). On the other hand, by choosing a Schiffer variation $\zeta_p$ supported at a point $p\in\Supp(D)$, the infinitesimal invariant produces a non--trivial contribution.
    As a consequence, one obtains the lower bound
    \[
    \operatorname{rk}(\nu)\geq h^0(C_y,\omega_{C_y}(-\tilde D))
    \]
    for a general point $y\in Y$.
\end{remark}

\section{Proofs of the Main Results}

In this section we prove the main theorems of the article.

\begin{theorem}\label{main th 1}
Let $\pi:\mathcal C \to Y$ be a family of complex curves of genus $g \geq 2$, and let $m:Y \longrightarrow \mathcal M_g$ be the modular map and $m(Y) \subseteq \mathcal{M}_g$ its image. Let $\psi$ be a section of the Picard bundle $\mathrm{Pic}^n(\pi)$ and 
    \[
        \nu = (2g-2)\psi - n\kappa
    \]
be the associated normal function. Then the following inequality holds,
\[
d_S(\psi) + \mathrm{rk}(\nu) + \codim_{\mathcal M_g}(m(Y)) \;\geq\; g-1.
\]
\end{theorem}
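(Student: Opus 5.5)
We argue by contradiction. Suppose $d_S(\psi)+\mathrm{rk}(\nu)+\codim_{\mathcal M_g}(m(Y))\le g-2$. Set $d:=d_S(\psi)$, $r:=\mathrm{rk}(\nu)$ and $c:=\codim_{\mathcal M_g}(m(Y))$. Fix a general point $y\in Y$, write $C=C_y$, and choose a divisor $D=\sum_{i=1}^d a_ip_i$ with $\psi(y)=\mathcal O_C(D)$ realising $d_S(\psi)$.

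The first step is to represent $\nu$ in a form to which Theorem \ref{Griff_theorem} applies. Since $d\le g-2$, the space $H^0(C,\omega_C(-p_1-\dots-p_d))$ has dimension at least $g-d\ge 2$, and in fact at least $g-d$. For any $\omega$ in this space, with zero divisor $K_\omega$, the class $\nu(y)=(2g-2)\psi(y)-n\kappa(y)$ is represented by the degree-zero divisor $(2g-2)D-nK_\omega$. Its support lies in $Z(\omega)$ because $p_i\in Z(\omega)$. So for every such $\omega$ and every $h\in H^0(\mathcal O_{Z(\omega)})$, Griffiths' formula gives
\[
\delta\nu(y)(\partial_Z h\otimes\omega)=(2g-2)\sum_i a_ih(p_i)-n\sum_{q\in Z}\mathrm{ord}_q(\omega)\,h(q),
\]
provided $\partial_Z h$ lies in the image of the Kodaira--Spencer map $\kappa_y$.

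The second step accounts for the directions actually available. The image $\kappa_y(T_{Y,y})$ has codimension $c$ in $H^1(C,T_C)$. Moreover, by Proposition \ref{loc-cost-lungo-xi}, there is a subspace $V\subset T_{Y,y}$ of codimension at most $r$ such that $\nu$ is locally constant along every direction of $V$. Then $W:=\kappa_y(V)$ has codimension at most $c+r$ in $H^1(C,T_C)$. By Remark \ref{remark loc const}, $\delta\nu(y)(\xi\otimes\omega)=0$ whenever $\kappa_y(\xi)\in W\cap H^1(C,T_C)_\omega$. Now $H^1(C,T_C)_\omega=\partial_Z(H^0(\mathcal O_Z))$ has dimension $2g-3$, so $W\cap H^1(C,T_C)_\omega$ has dimension at least $2g-3-c-r$. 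Since $c+r\le g-2-d$, we get many functions $h$ on $Z$, modulo constants, with vanishing pairing.

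The third step, which is the main obstacle, is to extract a contradiction. I would use the freedom in $\omega$ to choose it with as few distinct zeros as possible outside the support: impose extra vanishing so that $\omega$ has high order at the $p_i$, or even vanishes at some $p_i$ to order large enough that $Z$ is concentrated on $\mathrm{Supp}(D)$ plus few other points. Then the linear functional $h\mapsto(2g-2)\sum a_ih(p_i)-n\sum \mathrm{ord}_q(\omega)h(q)$ on $H^0(\mathcal O_Z)/\mathbb C$ must vanish on a subspace of codimension at most $c+r\le g-2-d$. Using Lemma \ref{schiffer indip}, the higher Schiffer classes at each point are independent. By choosing $h$ supported at a single point $p_i$, or at a single point $q$ of $Z$ off $D$, and using the dimension count, one would force $(2g-2)a_i=n\,\mathrm{ord}_{p_i}(\omega)$ for all $i$ and $\mathrm{ord}_q(\omega)=0$ off $D$, provided $n\ne 0$. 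This means $(2g-2)D=nK_\omega$ as divisors, with $\mathrm{Supp}(K_\omega)=\mathrm{Supp}(D)$. But then $g-1\le\#\mathrm{Supp}(K_\omega)=d$, since a canonical divisor of a general curve in the family has support of size at least $g-1$; by genericity one should reduce to canonical divisors with support at least $g-1$, as in the Remark on $d_S(\kappa)$. This contradicts $d\le g-2$.

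The delicate point is making the counting honest. The vanishing only holds on a subspace of codimension $c+r$, not on every single-point $h$. So I would vary $\omega$ over the $(g-d)$-dimensional linear system and argue that the bad set of pairs $(\omega,h)$ cannot cover everything when $g-d>c+r+1$. Concretely, consider the $g-1-d$ independent conditions obtained from distinct $\omega$'s, and compare with the codimension $c+r$.
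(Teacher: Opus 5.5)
Your Steps 1 and 2 are fine. In Step 2 you implicitly use that the directions along which $\nu$ is locally constant form a linear subspace $V$, namely the kernel of the differential of the Betti map. Granting that, the subspace $W=\kappa_y(V)$, of codimension at most $c+r$, on which $\delta\nu(y)(\,\cdot\otimes\omega)$ vanishes, is a good setup.

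The gap is in Step 3, which breaks in two places.

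First, you know only that $\Lambda_\omega(h)=(2g-2)\sum_i a_ih(p_i)-n\sum_q\mathrm{ord}_q(\omega)h(q)$ vanishes on $\partial_Z^{-1}(W)$, a subspace of codimension up to $c+r$. That gives no control over any particular $h$. Nothing places the class of a function supported at a single point into $W$. So you obtain only linear relations among combinations of coefficients, never the individual identities $(2g-2)a_i=n\,\mathrm{ord}_{p_i}(\omega)$ and $\mathrm{ord}_q(\omega)=0$. The suggestion to vary $\omega$ is never turned into an argument.

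Second, even granting $(2g-2)D=nK_\omega$, the final contradiction rests on the claim that a canonical divisor of a general curve in the family has support of size at least $g-1$. This is false when $c>0$. On a hyperelliptic family ($c=g-2$), take $\psi=\kappa$ and $D=(2g-2)w$ with $w$ a Weierstrass point. Then $r=0$ and $d=1$, and indeed $(2g-2)D=nK$ with $\mathrm{Supp}(K)=\mathrm{Supp}(D)$. For $c=0$ the claim is exactly the case $\psi=\kappa$ of the theorem you are proving. The remark on $d_S(\kappa)$ in Section 1 does not supply it, since it is only an upper bound. The contradiction has to come from the inequality $c+r\le g-2-d$, and your final step never uses it.

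The missing idea is to stop trying to detect the coefficients $a_i$ and instead detect the canonical part of the representative at an auxiliary point.

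\begin{enumerate}
\item Fix a general $p\notin\mathrm{Supp}(D)$. Since $h^0(\omega_C(-p_1-\dots-p_d))\ge g-d\ge c+r+2$ and $p$ is general, there are forms $\omega_1,\dots,\omega_{c+r+1}$ in this space with $\mathrm{ord}_p(\omega_j)=j$.
\item Set $\zeta_j=[\bar\partial\rho/\omega_j]$, with $\rho$ a bump function at $p$. By Lemma \ref{schiffer indip} these span a space of dimension $c+r+1>\codim W$. So it contains a nonzero $\zeta=\sum_{i\le j}\alpha_i\zeta_i\in W$ with $\alpha_j\neq0$.
\item Since $\omega_j/\omega_i$ is holomorphic near $p$ for $i\le j$, you get $\zeta\cdot\omega_j=0$. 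Moreover $\zeta=\partial_{Z_j}h$ with $h$ supported at $p$ and $h(p)=\alpha_j$.
\item Write $\nu(y)=\mathcal O_C((2g-2)D-n\,\mathrm{div}(\omega_j))$ and take $\xi\in V$ with $\kappa_y(\xi)=\zeta$. Theorem \ref{Griff_theorem} gives $\delta\nu(y)(\xi\otimes\omega_j)=-nj\alpha_j\neq0$. The $D$-terms drop out because $h(p_i)=0$, and $n\neq0$ as you also assume. This contradicts Remark \ref{remark loc const}.
\end{enumerate}

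This is the paper's proof. Phrased with your $W$, it even avoids the paper's loose ``without loss of generality the first $r+1$ indices'' step in its Grassmann count.
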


\begin{proof}
    Set $r:= \mathrm{rk}(\nu)$, $d:=d_S(\psi)$ and $c:=\codim_{\mathcal{M}_g}m(Y)$.
    Let $y \in Y$ be a general point, and set 
    \[
        \psi(y) = \OO_{C_y}(D) \in \Pic^{n}(\pi),
    \]
    for some $D$ divisor of degree $n$ on $C_y$ such that $d_S(D) = d$. Write 
    \[
        D = \sum_{i=1}^{g-1}a_ip_i, 
        \qquad 
        \Supp(D) = \{p_1, \dots, p_{g-1}\},
    \]
    and set $K$ the canonical divisor of $C_y$. Assume, for contradiction, that
    \begin{equation} \label{disug supp+rank}
        d + r +c< g - 1.
    \end{equation}
    Applying Proposition \ref{loc-cost-lungo-xi} we will prove that the rank must be greater than $r$, which contradicts the assumption.
    By (\ref{disug supp+rank}) and the Riemann–Roch theorem we have
    \[
        h^0(K - \sum_{i=1}^{d}p_i) \geq r + c + 2.
    \]
    Therefore, fixed a general point $p \in C_y \backslash \mathrm{Supp}(D)$, we can choose $r+c+1$ independent holomorphic forms $\omega_1, \dots, \omega_{r+c+1} \in H^0(C_y, \omega_{C_y}(-p_1- \cdots -p_d))$ with $\mathrm{ord}_p(\omega_j) = j$ and such that $\mathrm{Supp}(D)$ is contained in the set of zeros of $\omega_j$ for $j = 1, \dots, r+c+1$. Consider the associated higher-Schiffer cohomology classes
    \[
        \zeta_{j} = \left[\frac{\bar{\partial}\rho}{\omega_j}\right]_{Dolb} \in H^1(C_y,T_{C_y}) \quad j=1,\dots,r+c+1,
    \]
    where $\rho$ is the bump function centered at the point $p$. Consider the subspace $\Pi:= \langle \zeta_1,\dots,\zeta_{r+c+1}\rangle \subset H^1(C_y,T_{C_y})$  generated by $\zeta_1, \dots ,\zeta_{r+c+1}$. By Lemma \ref{schiffer indip} these vectors are linearly independent, hence
    \[
    \dim \Pi = r+c+1.
    \]
    By Grassmann formula we have
    \[
    \dim (\Pi \cap \mathrm{d}m_y(T_{Y,y})) \geq r+1,
    \]
    where $\mathrm{d}m_y$ is the differential of the modular map at $y \in Y$.
    Then, there are $r+1$ linearly independent higher Schiffer classes $\zeta_{1}, \dots, \zeta_{r+1}$ in $\Pi \cap \mathrm{d}m_y(T_{Y,y})$, 
    where (without loss of generality) we assume that they correspond to the first $r+1$ indices. 
    Since $y \in Y$ is a general point we can find vectors $\xi_j \in T_{Y,y}$ such that $\kappa_y(\xi_j) = \zeta_j$ for $j= 1, \dots, r+1$, where $\kappa_y$ is the Kodaira-Spencer map.
    Set $\tilde{\Pi}:= \langle \xi_1,\dots,\xi_{r+1}\rangle \subset T_{Y,y}$ the subspace generated by these vectors.

    Suppose there exists a vector $\xi = \sum_{i = 1}^{r+1} \alpha_i \xi_i \in \tilde{\Pi}$ such that $\nu$ is locally constant along $\xi$.
    By construction $\kappa_y(\xi) \cdot \omega_{r+1} = \sum_{i = 1}^{r+1}\alpha_i \zeta_i \cdot \omega_{r+1} = 0$. Set $Z_{r+1} := \mathrm{div}(\omega_{r+1})$ as representative of the canonical divisor and write
    \[
        \nu(y) = \mathcal{O}_{C_y}((2g-2)D - n\cdot Z_{r+1}).
    \]
    By Remark \ref{remark loc const}, applying the Griffiths' formula we have
    \[
        0 = \delta \nu(y) (\xi \otimes \omega_{r+1}) = -\alpha_{r+1}\cdot n(r+1),
    \]
    which implies $\alpha_{r+1} = 0 $. Therefore, $\xi = \sum_{i=1}^{r} \alpha_i \xi_i$. We now proceed by reverse induction on $j$. At each step, we replace the representative of the canonical divisor by setting $Z_j := \mathrm{div}(\omega_j)$, and evaluate the infinitesimal invariant on $\xi \otimes \omega_j$. Observe that
    \[
        \sum_{i = 1}^{j} \alpha_i \zeta_i \cdot \omega_j = 0,
    \]
    so the same argument as above shows that $\alpha_j = 0$. Repeating this process inductively for $j = r, r - 1, \dots, 1$, we conclude that $\alpha_i = 0$ for all $i = 1, \dots, r+1$, and hence $\xi = 0$. This proves that for every $\xi \in \tilde{\Pi}$ and for every local lifting $\tilde{\nu} \in \mathcal{H}$ 
    \[
    \nabla_\xi \tilde{\nu} \neq 0,
    \]
    that is $\nu$ is not locally constant along $\xi$. By Proposition \ref{loc-cost-lungo-xi} this shows that $\nu$ has rank greater than $r$, hence a contradiction.
\end{proof}

\begin{remark}
    The result of Theorem~\ref{main th 1} is implied by \cite[Th.~2.3.1]{Betti_map}. However, our argument applies to analytic families of curves, namely to analytic open subsets of $\mathcal{M}_g$.
\end{remark}

Observe that if the modular map $m$ is dominant, the inequality becomes
\[
    d_S(\psi) + \mathrm{rk}(\nu) \;\geq\; g-1.
\]
In this case we have a characterization for $\psi$ when $\nu$ is locally constant and $d_S(\psi) = g-1$. More precisely, we have the following.

\begin{theorem}\label{theo-spin-section}
    Let $\pi:\mathcal{C} \to Y$ be a family of complex curves of genus $g \geq 2$, and assume that the image of the modular map $m: Y \to \mathcal{M}_g$ contains an analytic open subset.  
    Let $\psi$ be a section of the Picard bundle $\mathrm{Pic}^n(\pi)$, and consider the associated normal function
    \[
        \nu = (2g-2)\psi - n\kappa .
    \]
    If $\mathrm{rk}(\nu) = 0$, i.e.\ $\nu$ is locally constant, then $d_S(\psi) = g-1$ if and only if, for a general point $y \in Y$ and every divisor $D$ such that $\psi(y) = \mathcal{O}_{C_y}(D)$ with $d_S(D) = d_S(\psi)$, there exists a holomorphic form $\omega \in H^0(C_y,\omega_{C_y})$ with divisor $K := \sum_{i=1}^{g-1} b_i p_i$ and support $\mathrm{Supp}(K) = \{p_1,\dots, p_{g-1}\}$, such that $\mathrm{Supp}(D) = \mathrm{Supp}(K)$, such that one of the following holds:
    \begin{enumerate}
        \item $D$ is a multiple of the divisor $\sum_{i=1}^{g-1} p_i$, i.e.\ $\psi$ is a multiple of an odd spin section;
        \item $D$ is a multiple of $K$, i.e.\ $\psi$ is a multiple of the canonical section.
    \end{enumerate}
    In either case, $\mathrm{Supp}(D) = \mathrm{Supp}(K)$.
    
\end{theorem}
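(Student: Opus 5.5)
The plan treats the two implications separately, with the substantive work in the direction assuming $d_S(\psi)=g-1$. The reverse direction is a check. If $D$ is a multiple of $\sum p_i$ with $2\sum p_i \sim K$ an odd theta characteristic, or a multiple of $K$, then $\nu$ is a torsion section and hence locally constant. Moreover $d_S(D)\le g-1$. The inequality of Theorem \ref{main th 1}, with $\mathrm{rk}(\nu)=0$ and $\codim=0$ on an analytic open set (the proof of Theorem \ref{main th 1} is purely local, so it applies), gives $d_S(\psi)\ge g-1$. Hence equality holds.

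For the forward direction, fix a general $y$ and $D=\sum_{i=1}^{g-1}a_ip_i$ with $\psi(y)=\mathcal{O}_{C_y}(D)$ and exactly $g-1$ support points. By Riemann--Roch, $h^0(K-\sum p_i)\ge 1$, so there is a nonzero $\omega$ vanishing on $\mathrm{Supp}(D)$. Since $\deg K=2g-2$, write $Z=\mathrm{div}(\omega)=\sum b_ip_i+E$ with $b_i\ge1$ and $E$ effective, disjoint from the $p_i$.

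The first key step is to show $E=0$. Suppose instead that $q\in\mathrm{Supp}(E)$. Represent
\[
\nu(y)=\mathcal{O}_{C_y}\bigl((2g-2)D-nZ\bigr),
\]
whose support is contained in the zero scheme of $\omega$. Take the higher Schiffer class $\zeta_q$ with $h=1$ at $q$ and $0$ elsewhere. Because $m$ has open image and $y$ is general, this class is realised as $\kappa_y(\xi)$. Theorem \ref{Griff_theorem} then gives $\delta\nu(y)(\xi\otimes\omega)=-n\,\mathrm{ord}_q(\omega)$, which is nonzero when $n\neq0$; this contradicts Remark \ref{remark loc const}. (If $n=0$, then $\psi$ itself is locally constant of degree $0$, hence torsion, and the same argument with $h$ supported at each $p_i$ forces all $a_i=0$; this case has to be dealt with separately or excluded.) Thus $\mathrm{Supp}(K)=\mathrm{Supp}(D)$ and $K=\sum b_ip_i$.

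The second key step applies Griffiths' formula at each $p_j$ with $h$ equal to the indicator of $p_j$, together with the higher Schiffer variations of Lemma \ref{schiffer indip}: the functions $z^k$ near $p_j$ for $k<b_j$. The value-at-$p_j$ variation gives
\[
(2g-2)a_j-nb_j=0\quad\text{for all } j,
\]
so $D=\tfrac{n}{2g-2}K$ rationally, i.e.\ $D$ is proportional to $K$. Write $D=\lambda K$ with $\lambda=n/(2g-2)$ and $\lambda b_j\in\mathbb Z$.

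It remains to turn this proportionality into the dichotomy. If all $b_j$ share a common factor, split according to whether $\lambda\in\mathbb Z$, giving case (2), or not. The main obstacle is to show that, on a general curve, a canonical divisor supported on exactly $g-1$ points must be either $2\sum p_i$ (an odd theta characteristic) or otherwise force $\lambda$ to make $D$ an integral multiple of $K$; equivalently, to rule out patterns of $b_j$ with unequal multiplicities and a nontrivial gcd. For this I would use dimension counts. The locus of canonical divisors with a prescribed multiplicity pattern $(b_1,\dots,b_{g-1})$ is a stratum of abelian differentials of dimension $2g-2+(g-1)-1-(\text{redundancy})$; for $g-1$ zeros this stratum has dimension $3g-3$ projectivised. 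I would then invoke Corollary \ref{coro uniq K} to pin down uniqueness, concluding that $D$ is a multiple of $\sum p_i$ (when $b_i=2$ for all $i$) or of $K$.
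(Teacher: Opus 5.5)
Your first two steps are correct and follow the paper's argument. The Schiffer class at a zero $q\notin\mathrm{Supp}(D)$ of $\omega$ kills $E$; you are more careful than the paper about the coefficient $-n\,\mathrm{ord}_q(\omega)$, and the case $n=0$ simply contradicts $d_S(\psi)=g-1$, as you observe. The Schiffer classes at the $p_j$ then give $(2g-2)a_j=nb_j$, i.e.\ $D=\lambda K$ with $\lambda=n/(2g-2)$. Your converse direction is also fine. It is in fact immediate, since the right-hand side already asserts that $\mathrm{Supp}(D)=\mathrm{Supp}(K)$ has $g-1$ points and that $d_S(D)=d_S(\psi)$.

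The gap is in the last step. You leave the case $\lambda\notin\mathbb Z$ open. The obstacle you name, multiplicity patterns $(b_j)$ with unequal entries and a nontrivial gcd, does not exist, and the reason is arithmetic, not geometric: the $b_j$ are $g-1$ positive integers with $\sum_j b_j=2g-2$.

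Hence either every $b_j\ge 2$, which forces $b_j=2$ for all $j$, or some $b_j=1$. Equivalently, $\gcd(b_1,\dots,b_{g-1})\in\{1,2\}$, which is exactly the paper's case split.
\begin{itemize}
\item If some $b_j=1$, then $\lambda=\lambda b_j=a_j\in\mathbb Z$ and $D=\lambda K$, which is case (2).
\item If all $b_j=2$, then $K=2\sum p_j$, so $\sum p_j$ is a theta characteristic with a section (odd on a general curve). Moreover $a_j=2\lambda=n/(g-1)$ is the same integer for every $j$, so $D=\frac{n}{g-1}\sum p_j$, which is case (1).
\end{itemize}

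Your proposed dimension count could not do this job. Every stratum of differentials with exactly $g-1$ zeros has projectivised dimension $2g-2+(g-1)=3g-3=\dim\mathcal{M}_g$, whatever the multiplicity pattern. So counting cannot distinguish $(2,\dots,2)$ from patterns containing a $1$. Likewise, Corollary~\ref{coro uniq K} only gives uniqueness of $K$, not the shape of $D$.
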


\begin{proof}
    We use the same notation and conventions of the proof of the Theorem \ref{main th 1}.
    Let $y \in Y$ be a general point, and set 
    \[
        \psi(y) = \OO_{C_y}(D) \in \Pic^{n}(\pi),
    \]
    for some $D$ divisor of degree $n$ on $C_y$ such that $d_S(D) = d_S(\psi) = g-1$. Write 
    \[
        D = \sum_{i=1}^{g-1}a_ip_i, 
        \qquad 
        \Supp(D) = \{p_1, \dots, p_{g-1}\},
    \]
    and set $K$ the canonical divisor of $C_y$.
    By Riemann-Roch we have
    \[
        h^0(K - \sum_{i=1}^{g-1}p_i) = 1.
    \]
    Then, there exists a holomorphic form $\omega \in H^0(C_y,\omega_{C_y})$ such that $\mathrm{Supp}(D)$ is contained in the set of zeros of $\omega$. Let $K = \mathrm{div}(\omega) = \sum_{i = 1}^{g-1} b_i p_i + \sum_{j = 1}^{l} c_j q_j$ and use this as a representative of the normal function $\nu$,
    \[
        \nu(y) = \mathcal{O}_{C_y}((2g-2)D - n\cdot K).
    \]
    Let us show that $l = 0$. Let $\zeta_{q_j}$ be the higher-Schiffer cohomology class centered on $q_j$, and let $\xi_j \in T_{Y,y}$ the corresponding vector such that $\kappa_y(\xi_j) = \zeta_{q_j}$. Applying the Griffiths' formula on $\xi_j \otimes \omega$ we have
    \[
        \delta \nu(y) (\xi_j \otimes \omega) = c_j.
    \]
    Since $\nu$ is locally constant, we must have $c_j = 0$ for $j = 1, \dots, l$, so that 
    \[
        K = \sum_{i = 1}^{g-1} b_i p_i.
    \]
    Moreover, the divisor $K$ is unique by Corollary \ref{coro uniq K}. 
    We distinguish two cases. If $\gcd(b_1, \dotsc, b_{g-1}) = 2$, then $b_i = 2$ for every $i$,
    \[
        K = \sum_{i = 1}^{g-1} 2 p_i = 2 \left(\sum_{i = 1}^{g-1} p_i\right).
    \]
    That is $\omega_{C_y} = \mathcal{L}^2$ where $\mathcal{L} = \mathcal{O}_{C_y}(\sum_{i = 1}^{g-1}  p_i)$. Then the normal function has the form
    \[
        \nu(y) = \mathcal{O}_{C_y}\left(\sum_{i = 1}^{g-1}\left[(2g-2)a_i  -2n\right]\cdot p_i\right).
    \]
    Applying the Griffiths' formula on the higher-Schiffer cohomology classes $\zeta_{p_i}$ we get the relations
    \[
        (2g-2)a_i =2 n \quad i=1,\dots,g-1.
    \]
    That is, there exists an integer $k$ such that $a_i = k$ for every $i$. This yields
    \[
        D = \sum_{i = 1}^{g-1} k \cdot p_i = k \cdot\left( \sum_{i = 1}^{g-1}  p_i\right).
    \]
    On the other hand if $\gcd(b_1, \dotsc, b_{g-1}) = 1$ then there exists and index $i$ such that $b_i = 1$. Let us suppose $i = 1$. With the same argument as before, applying the Griffiths' formula on the higher-Schiffer $\zeta_{p_1}$ we get
    \[
        (2g-2) a_1 = n.
    \]
    Set $k:= a_1$. Hence, proceeding in the same way, we obtain
    \[
        D = \sum_{i = 1}^{g-1} a_i  p_i = k \left(\sum_{i = 1}^{g-1}b_i p_i\right) = k\cdot K.
    \]
    This concludes the proof.
    \end{proof}  

As a consequence of Theorem \ref{theo-spin-section} we obtain the following corollary.

\begin{corollary}\label{coro-spin}
    Under the assumptions of Theorem~\ref{theo-spin-section}, if $\psi \in \mathrm{Pic}^{g-1}(\pi)$ is a section such that, for a general point $y \in Y$, $h^0(C_y,\psi(y)) > 0$ and $\nu$ is locally constant, then $\psi$ coincides with the odd spin section.
\end{corollary}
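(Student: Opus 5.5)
The plan is to reduce the corollary to Theorem~\ref{theo-spin-section} and then use the degree constraint $n=g-1$ to pin down the multiple. Let $y\in Y$ be general. Since $h^0(C_y,\psi(y))>0$, we can write $\psi(y)=\mathcal{O}_{C_y}(D)$ with $D$ effective of degree $g-1$. This gives $d_S(\psi)\le g-1$. On the other hand, the image of $m$ contains an open set, so $\codim_{\mathcal M_g}(m(Y))=0$. Together with $\mathrm{rk}(\nu)=0$, Theorem~\ref{main th 1} gives $d_S(\psi)\ge g-1$, hence $d_S(\psi)=g-1$.

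The first step is to arrange that the divisor realizing the minimal support can be taken effective. Take any $D$ with $\psi(y)=\mathcal{O}_{C_y}(D)$ and $d_S(D)=g-1$. By Theorem~\ref{theo-spin-section} there is a form $\omega$ with $\mathrm{div}(\omega)=K=\sum_{i=1}^{g-1}b_ip_i$, $b_i\ge 1$, $\sum b_i=2g-2$, and $D$ is either $k\sum p_i$ or $kK$. Comparing degrees, $\deg D=g-1$ forces $k(g-1)=g-1$ in the first case, so $k=1$. In the second case it forces $k(2g-2)=g-1$, which is impossible for an integer $k$. In particular the theorem's conclusion already yields an effective representative, so no separate choice is needed.

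So $D=\sum_{i=1}^{g-1}p_i$ and $K=\sum b_ip_i$. Two cases remain. If $\gcd(b_i)=2$, then all $b_i=2$ (as in the proof of the theorem), so $2D=K$ and $\psi(y)$ is a theta characteristic with $h^0>0$. It must be odd: by Riemann--Roch and the generality of $C_y$ (Corollary~\ref{coro uniq K} / generic vanishing of even thetanulls), an effective theta characteristic on a general curve has $h^0=1$. If instead $\gcd(b_i)=1$, the proof of the theorem (Griffiths' formula on $\zeta_{p_i}$) gives $(2g-2)a_i=nb_i$ with $a_i=1$ and $n=g-1$. Hence $b_i=2$ for all $i$, contradicting $\gcd=1$. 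Thus $2\psi(y)=\omega_{C_y}$ with $h^0$ odd at the general point. By continuity $2\psi=\kappa$ on all of $Y$, so $\psi$ is the odd spin section; in particular it is locally the unique one through $\psi(y)$, since theta characteristics form a discrete set.

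The main obstacle is the parity claim in the $\gcd=2$ case, namely excluding $h^0\ge 2$. For that I would rely on the dimension count: curves with an effective theta characteristic having $h^0\ge 2$ form a proper subvariety of $\mathcal M_g$, so this cannot happen at a general point of an open subset. The case $h^0=1$ then gives oddness directly.
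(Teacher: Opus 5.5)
Your proposal is correct and follows the paper's implicit route. Theorem~\ref{main th 1} together with effectivity gives $d_S(\psi)=g-1$, Theorem~\ref{theo-spin-section} then applies, and $\deg D=g-1$ rules out multiples of $K$ and forces $D=\sum p_i$ with $2D=K$. Your parity step relies on the classical fact that a general curve has no theta characteristic with $h^0\ge 2$. You could instead get it internally: rerun the Griffiths-formula argument on any second form vanishing on $D$; that form is forced to have divisor $2D$, so $h^0(D)=h^0(K-D)=1$.
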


An application of the Theorem \ref{main th 1} is the following. Let $\mathcal{K}_g \subset \mathcal{M}_g$ denote the locus of $k$-gonal curves, namely
\[
    \mathcal{K}_g \;=\; \Big\{\, [C] \in \mathcal{M}_g \;\Big|\; \exists f : C \to \mathbb{P}^1 \text{ of degree } k \,\Big\}.
\]
It is well known that $\mathcal{K}_g$ is an irreducible subvariety of dimension $\dim\, \mathcal{K}_g = 2g + 2k - 5$ (see \cite{Fulton}).
Assuming $g > 2k - 2$,its codimension in $\mathcal{M}_g$ is
\[
    \operatorname{codim}_{\mathcal{M}_g}(\mathcal{K}_g) = g - 2k + 2.
\]

Let $\pi : \mathcal{C} \to Y$ be a family of $k$-gonal curves whose modular map factors generically through $\mathcal{K}_g$, and let 
\[
    \psi \in \operatorname{Pic}^k(\pi)
\]
be the section corresponding to the $g^1_k$ defining the $k$-fold map to $\mathbb{P}^1$. Since for the generic curve the ramification is simple we have,
\[
    d_S(\psi) = k -1.
\]
Then, applying Theorem~\ref{main th 1} to the associated normal function $\nu$, we obtain 
\begin{equation*}
    (k-1) + (g-2k+2) + \operatorname{rk}(\nu) \geq g-1,
\end{equation*}
giving the lower bound
\begin{equation}\label{eq:rank-k-gonal}
    \operatorname{rk}(\nu) \;\ge\; k - 2.
\end{equation}

Note that in the hyperelliptic case $k = 2$, the inequality \eqref{eq:rank-k-gonal} becomes an equality.  
Indeed, for hyperelliptic curves the unique $g^1_2$ is induced by the canonical linear system, and the rank of $\nu$ is exactly $0$.

\section{Plane Curves}
    In this section we turn to plane curves and apply the methods established in the previous sections in order to prove the Proposition \ref{prop_d_4_5 introduz}.
    Let $\mathcal{V}_d \subset\mathcal{M}_g$ be the subvariety of the smooth plane curves of degree $d$ and genus $g = \frac{(d-1)(d-2)}{2}$.
    Let $\pi: \mathcal{C} \to Y$ be a family of plane curves of degree $d$. For us, this means that there exists a morphism
    \[
        \Phi: \mathcal{C} \longrightarrow \mathbb{P}^2 \times Y
    \]
    such that, for every $y \in Y$, the induced map
    \[
        \Phi_y: C_y \longrightarrow \mathbb{P}^2 \times \{y\}
    \]
    is an embedding, and $\Phi_y(C_y) \subset \mathbb{P}^2$ is a smooth plane curve of degree $d$. Let
    \[
        m: Y \to \mathcal{V}_d \subset\mathcal{M}_g
    \]
    be the modular map.
    For simplicity, we will identify $C_y$ with its image $\Phi_y(C_y)$.

\subsection{The Case of Quartic Plane Curves}

For quartic plane curves, Theorem~\ref{theo-spin-section} applies directly, yielding an immediate geometric interpretation.

\begin{proposition}\label{prop quartic}
    Let $C$ be a very general plane curve of degree $4$, and let $D \subset \mathbb{P}^2$ be a curve. Suppose that $C$ and $D$ intersect in exactly two points $p_1$ and $p_2$. Then $p_1$ and $p_2$ lie either on a bitangent line or on a flex line.
\end{proposition}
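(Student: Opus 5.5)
The plan is to reduce Proposition~\ref{prop quartic} to Theorem~\ref{theo-spin-section} by exploiting the fact that smooth plane quartics are exactly the non-hyperelliptic curves of genus $g=3$. In particular $\mathcal V_4$ is open in $\mathcal M_3$, so the tautological family of smooth plane quartics has modular map whose image contains an open subset.

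\emph{Setup.} Let $\deg D=e$. Since $C\cdot D$ is supported on $\{p_1,p_2\}$, we can write $C\cdot D=a_1p_1+a_2p_2$ with $a_1+a_2=4e$. On the other hand, $\mathcal O_C(D)\cong\mathcal O_C(e)=\omega_C^{\otimes e}$, because $\omega_C=\mathcal O_C(1)$ for a plane quartic. Since $C$ is very general, any intersection configuration with two points spreads out over a finite (or étale) cover $Y$ of an open subset of the space of quartics; the countably many components of the relevant Hilbert schemes are what justify the phrase ``very general''. Over $Y$ we obtain a relative divisor $\mathcal D$ with $D_y=a_1p_1(y)+a_2p_2(y)$.

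\emph{Applying the main theorem.} Let $\psi=\psi_{\mathcal D}\in\mathrm{Pic}^{4e}(\pi)$. Then $\psi=e\kappa$, so the associated normal function
\[
\nu=(2g-2)\psi-4e\kappa=4e\kappa-4e\kappa
\]
vanishes identically, and in particular $\mathrm{rk}(\nu)=0$. The support of $\psi$ satisfies $d_S(\psi)\le 2=g-1$. By Theorem~\ref{main th 1} with $c=0$ we also have $d_S(\psi)\ge g-1$, so $d_S(\psi)=2$ exactly, and the divisor $D_y$ realizes the minimum. Theorem~\ref{theo-spin-section} then provides a holomorphic form $\omega$ with $\operatorname{div}\omega=K=b_1p_1+b_2p_2$ and $\mathrm{Supp}(K)=\{p_1,p_2\}$.

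\emph{Geometric interpretation.} Canonical divisors on a plane quartic are line sections, so there is a line $L$ with $L\cdot C=b_1p_1+b_2p_2$, where $b_1+b_2=4$ and $b_i\ge1$. The possibilities are:
\begin{itemize}
\item $(b_1,b_2)=(2,2)$: then $L$ is a bitangent, which is case (1), the odd theta characteristic;
\item $(b_1,b_2)=(3,1)$ or $(1,3)$: then $L$ is a flex line through both points, which is case (2).
\end{itemize}
Either way, $p_1$ and $p_2$ lie on a bitangent or a flex line.

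\emph{Main obstacle.} The delicate step is the family argument. One must make sure that the ``very general'' hypothesis lets the pair $(C,D)$ extend to a family over a base dominating an open set of $\mathcal M_3$, with $p_1$ and $p_2$ varying as distinct sections. The approach I would try first is a countability argument over the Hilbert schemes of pairs $(C,D)$ with $\deg D=e$ satisfying the tangency conditions. For each $e$ and each $(a_1,a_2)$ there are finitely many components. If a component does not dominate the space of quartics, then its image is a proper closed subset, and very general $C$ avoids it; so the pair $(C,D)$ must come from a dominating component. After passing to a finite base change to separate $p_1$ and $p_2$, we obtain the required family.
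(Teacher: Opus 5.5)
Your proposal is correct and follows the paper's own route. Taking $\psi=e\kappa$ forces $\nu\equiv 0$; Theorem~\ref{theo-spin-section} then produces a line section $b_1p_1+b_2p_2$ supported exactly on $\{p_1,p_2\}$; and the splittings $2+2$ and $3+1$ give the bitangent and flex cases. Your appeal to Theorem~\ref{main th 1} to confirm $d_S(\psi)=2$ just makes explicit a hypothesis the paper leaves implicit.

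The spreading-out step you flag as the main obstacle is unnecessary. As in the paper, $\psi=\Phi^*\mathcal{O}_{\mathbb{P}^2}(e)=e\kappa$ is defined on the whole family of quartics, and Theorem~\ref{theo-spin-section} applies to every minimal-support divisor representing $\psi(y)$ at a general point, so $D$ itself never has to deform.
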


\begin{proof}
    Let $\pi: \mathcal{C} \to Y$ be a family of plane quartic curves. Assume that the modular map
    \[
        m: Y \to \mathcal{M}_3
    \]
    is dominant.
    For a general point $y_0 \in Y$ we write $C = C_{y_0}$. If $D \subset \mathbb{P}^2$ is a curve of degree $n$ intersecting $C$, then the corresponding line bundle $\mathcal{O}_C(D)$ defines a section of $\psi$ of $\mathrm{Pic}^{4n}(\pi)$ given by
    \[
        \psi = \Phi^*\mathcal{O}_{\mathbb{P}^2}(n) \;\cong\; n \cdot \Phi^*\mathcal{O}_{\mathbb{P}^2}(1) \;=\; n \cdot \kappa,
    \]
    and can be represented at $y_0 \in Y$ by
    \[
        \psi(y_0) = \mathcal{O}_{C}(a_1p_1 + a_2p_2),
    \]
    where $a_1 + a_2 = 4n$.  
    Then the associated normal function
    \[
        \nu = 4\psi - 4n\kappa = 0,
    \]
    hence locally constant. By Theorem~\ref{theo-spin-section}, there exists a holomorphic form $\omega \in H^0(C,\omega_C)$ representing the canonical divisor $K$ of $C$ with support 
    $\{p_1, p_2\}$, such that, up to exchanging $p_1$ and $p_2$, we must have
    \[
        a_1p_1 + a_2p_2 = nK =  n(2p_1 + 2p_2) 
    \]
    or
    \[
        a_1p_1 + a_2p_2 = nK =  n(3p_1 + p_2) 
    \]
    This proves that the two points $p_1$ and $p_2$ lie either on a bitangent line or on a flex line of $C$, as required.
\end{proof}

\subsection{The Case of Quintic Plane Curves}

We begin by recalling several classical facts concerning the cohomology of smooth plane curves.  
Let $(z_0,z_1,z_2)$ be homogeneous coordinates on $\PP^2$, and let
\[
    S = \bigoplus_{d \ge 0} S^d = \C[z_0,z_1,z_2]
\]
denote its homogeneous coordinate ring.  
Let $C \subset \PP^2$ be a smooth complex plane curve of degree $d \ge 4$, defined by a homogeneous polynomial $f$ of degree $d$
\[
    C = V(f) = \{ [z_0:z_1:z_2] \in \PP^2 \mid f(z_0,z_1,z_2)=0 \}.
\]

The \emph{Jacobian ideal} of $f$ is defined as the ideal generated by the partial derivatives of $f$,
\[
    J(f) = \big( \partial_{z_0} f,\ \partial_{z_1} f,\ \partial_{z_2} f \big) \subset S,
\]
and we denote by
\[
    R = S / J(f), \qquad R^k = (S/J(f))_k,
\]
the associated Jacobian ring and its degree–$k$ graded component.  
It is well known that $R$ is a standard Artinian Gorenstein algebra (SAGA) with socle in degree $3d - 6$; in particular, it satisfies the Gorenstein duality
\begin{equation}\label{gorenstein-duality}
    (R^d)^* \cong R^{2d - 6}.
\end{equation}

In particular,the vector space \(R^d\) can be identified with the Zariski tangent space to the orbifold moduli space of smooth plane curves of degree \(d\) (see \cite[Chap.~2, Sec.~6]{Voisin} for further details).
 
By the adjunction formula, $\omega_C \cong \OO_C(d-3)$, and therefore
\[
    H^0(C, \omega_C^{\otimes 2}) \cong H^0\big(C, \OO_C(2d-6)\big).
\]
The inclusion $R^{2d-6} \hookrightarrow H^0(C, \omega_C^{\otimes 2})$ together with Serre duality,
\begin{equation}\label{serre duality plane curves}
    \langle\, ,\, \rangle \colon 
        H^1(C, T_C) \times H^0(C, \omega_C^{\otimes 2}) 
        \longrightarrow \C,
\end{equation}

realizes the Gorenstein duality \eqref{gorenstein-duality}.  
Consequently,
\begin{equation}\label{perp jacobian ideal}
    R^d
    = \{\, \xi \in H^1(C, T_C) \mid 
         \langle \xi , w \rangle = 0
         \text{ for all } w \in J_{2d-6} \,\}
    = J_{2d-6}^{\perp},    
\end{equation}

where $J_{2d-6}$ denotes the degree–$(2d-6)$ component of the Jacobian ideal.

We now recall a Lemma that will be of use in the sequel. This statement is a special instance of the weak Lefschetz property for the Gorenstein ring $R^d$ with $d = 5$. For a comprehensive treatment of the general case, we refer the reader to \cite{Lef_prop} or \cite{Stanley_graded}.

\begin{lemma}\label{lem:injective-L}
    Let $C$ a generic smooth plane curve of degree $5$, give by the zeros of a homogeneous polynomial
    $F \in S^5$.    
    Let $L = \alpha z_0 + \beta z_1 + \gamma z_2 \in S_1$ be a linear form with $\alpha\beta\gamma \neq 0$.  
    Then the multiplication map
    \[
        \cdot L : R^4 \longrightarrow R^5 \quad [f] \mapsto [L \cdot f],
    \]
    is an isomorphism.
\end{lemma}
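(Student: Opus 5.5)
To prove Lemma~\ref{lem:injective-L}, we first note that $R^4$ and $R^5$ have the same dimension. For $d=5$ the Jacobian ideal is generated by three quartics $F_0,F_1,F_2$, where $F_i=\partial_{z_i}F$, and these form a regular sequence. So $R$ is a complete intersection with Hilbert series
\[
\left(\frac{1-t^4}{1-t}\right)^3=(1+t+t^2+t^3)^3 .
\]
Expanding this, we read off $\dim R^4=\dim R^5=12$, and the socle sits in degree $9=3d-6$. Since the spaces have equal dimension, it suffices to prove that $\cdot L$ is injective, or equivalently surjective. By Gorenstein duality \eqref{gorenstein-duality}, the pairing $R^4\times R^5\to R^9\cong\C$ is perfect, and under it multiplication by $L$ from $R^4$ to $R^5$ is self-adjoint. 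Hence injectivity amounts to the statement that no nonzero class $[f]\in R^4$ satisfies $Lf\in J(F)_5$.

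Next we reduce to the case $L=z_0$ by a linear change of coordinates. This changes $F$ only to another generic quintic, so the statement is invariant. The condition $\alpha\beta\gamma\neq 0$ is there so that the line $\{L=0\}$ is in general position with respect to the chosen coordinates; genericity of $F$ then makes it generic relative to the line. With $L=z_0$, the cokernel of $\cdot z_0$ is $(R/z_0R)^5$. The ring $R/z_0R$ is the quotient of $\C[z_1,z_2]$ by the restrictions $\bar F_0,\bar F_1,\bar F_2$ of the partials to $z_0=0$. The map $\cdot z_0$ is surjective exactly when $(R/z_0R)^5=0$, that is, when these three binary quartics span all quintics in $\C[z_1,z_2]$ after multiplication by linear forms. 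Concretely, we need the $3\times 2=6$ products $z_j\bar F_i$ to span the $6$-dimensional space of binary quintics.

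The last step is to check this spanning condition, and this is the step I expect to be the main obstacle. We need both that the three restricted quartics are generic enough and that the resulting $6\times 6$ determinant is nonzero. Since nonvanishing of this determinant is a Zariski open condition on $F$, it is enough to exhibit one example, and I would try the Fermat quintic first. For $F=z_0^5+z_1^5+z_2^5$ we get $\bar F_0=0$, $\bar F_1=5z_1^4$, $\bar F_2=5z_2^4$. These do not span, because $z_1^3z_2^2$ is missed, so Fermat fails; this is consistent with the known failure of the weak Lefschetz property at Fermat in the boundary direction. We therefore need a better test polynomial. One option is a perturbation such as $F=z_0^5+z_1^5+z_2^5+z_0(\dots)$, chosen so that $\bar F_0$ is a generic binary quartic. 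The cleanest route, though, is to note that, up to the relation $z_1\bar F_1+z_2\bar F_2=5\bar F$ from Euler's formula, the restrictions $\bar F_1,\bar F_2$ are the partials of an essentially arbitrary binary quintic $G=F(0,z_1,z_2)$, while $\bar F_0$ is an independent arbitrary binary quartic. Then we need $z_1\bar F_0$, $z_2\bar F_0$, $z_1G_1$, $z_2G_1$, $z_1G_2$, $z_2G_2$ to span all binary quintics, where $G_i=\partial_{z_i}G$. Since the Jacobian ideal of a generic binary quintic is a complete intersection of two quartics, its degree-$5$ part has dimension $4$. The classes in degree $5$ modulo this ideal form a $2$-dimensional space, and we must show that a generic quartic $Q=\bar F_0$ times linear forms maps onto it. This is a single explicit rank check, which I would carry out with $G=z_1^5+z_2^5$ and $Q=z_1^2z_2^2$. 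Here the degree-$5$ quotient is spanned by $z_1^3z_2^2$ and $z_1^2z_2^3$, and indeed $z_1Q=z_1^3z_2^2$ and $z_2Q=z_1^2z_2^3$ hit them. This example settles the spanning condition, and the lemma then follows for generic $F$ by openness.
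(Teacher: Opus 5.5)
Your proof is correct, and it takes a different route from the paper's. The paper uses the same openness principle (on the smooth locus $\dim R^4=\dim R^5=12$, so bijectivity is open in $F$). It then tests directly at the Fermat quintic, where $R=\mathbb{C}[z_0,z_1,z_2]/(z_0^4,z_1^4,z_2^4)$, by a linear-algebra computation with explicit bases of $R^4$ and $R^5$. That is exactly where $\alpha\beta\gamma\neq 0$ is needed: at the Fermat point, multiplication by any $L$ with a vanishing coefficient has a kernel in degree $4$, as you observed for $z_0$.

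You instead normalize $L$ to $z_0$ using $J(F\circ g)=g^*J(F)$. You then identify the cokernel of $\cdot z_0$ with the degree-$5$ part of $\mathbb{C}[z_1,z_2]/(\bar F_0,\bar F_1,\bar F_2)$. Since $\bar F_1,\bar F_2$ are the partials of $G=F(0,z_1,z_2)$ and $\bar F_0$ is the independent $z_0$-coefficient of $F$, the check collapses to a $6\times 6$ determinant that is visibly nonzero on monomials.

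Your route gives a much smaller, fully explicit verification. It also shows that the hypothesis $\alpha\beta\gamma\neq 0$ is superfluous: for every fixed nonzero $L$, a generic quintic works. So your sentence explaining the role of $\alpha\beta\gamma\neq 0$ is not accurate; your argument never uses it, and in the paper it is an artifact of testing at Fermat. The paper's route gives a single explicit smooth curve at which the property holds for all $L$ off the coordinate triangle at once.

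One point you should state explicitly. The simplest quintic realizing your test data, $z_1^5+z_2^5+z_0z_1^2z_2^2$, is singular at $[1:0:0]$. This is harmless because your determinant condition is Zariski open on all of $S^5$. You then intersect it with the dense open smooth locus, where the $F_i$ form a regular sequence, so surjectivity of $\cdot z_0$ gives bijectivity.
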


\begin{proof}
    Since the dimensions of the vector spaces $R^4$ and $R^5$ are independent of the polynomial $F$, the thesis is a open property of $F$, hence it is sufficient to check it for a particular $F$. Then, take $F$ the Fermat polynomial $F = z_0^5 + z_1^5 + z_2^5$. In this case, the proof reduces to a direct linear–algebra computation, obtained by writing explicit bases for $R^4$ and $R^5$.
\end{proof}

Let $C \subset \mathbb{P}^2$ a generic smooth plane curve of degree $5$. Then $\omega_C \cong \mathcal{O}_C(2)$ and $H^0(C,\omega_C) \cong R^2 = S^2$. Let $h, t \in H^0(C, \mathcal{O}_C(1))$ be two different non zero sections.
Denote by $H$ and $T$ the zero divisors in $C$ of $h$ and $t$, respectively. Under the natural isomorphism
    \[
        \varphi:H^0(C, \mathcal{O}_C(1)) \cong H^0(\mathbb{P}^2, \mathcal{O}_{\mathbb{P}^2}(1)),
    \]
the sections $h$ and $t$ correspond to linear forms defining distinct lines
$\ell_h, \ell_s \subset \mathbb{P}^2$,
\[
    \ell_h = V(\varphi(h)), \quad \ell_t = V(\varphi(t)),
\]
such that
\[
    H = (\ell_h \cdot C), \quad T = (\ell_t \cdot C).
\]
Let $\omega \in H^0(C,\omega_C) \cong H^0(C,\mathcal{O}_C(2))$ be the holomorphic form corresponding to
    \[
        \omega \equiv h \cdot t,
    \]
with support $\mathrm{Supp}(\operatorname{div}(\omega))
        = \mathrm{Supp}(H) \cup \mathrm{Supp}(T)$.

\begin{proposition}\label{prop existence schiffer}
    Let $C$ be a generic smooth plane curve of degree $5$. Let $t,h \in H^0(C,\mathcal{O}_C(1))$ be general sections such that,
    \begin{enumerate}[(a)]
        \item the multiplication map by $t$, $R^4 \stackrel{\cdot t}\rightarrow R^5$, is an isomorphism,
        \item $\mathrm{Supp}(T) \cap \mathrm{Supp}(H) = \emptyset$.
    \end{enumerate}
    Then, for every point $p \in \mathrm{Supp}(T)$, there exist deformations $\zeta, \eta \in H^1(C,T_C)$ such that, 
    \begin{enumerate}
        \item $\zeta \cdot\omega = 0$,
        \item $\zeta  = \zeta_{p} + \eta \in R^5$,
    \end{enumerate}
    where $\zeta_{p}$ is the higher Schiffer cohomology class centered on a point $p \in \mathrm{Supp}(T)$.
\end{proposition}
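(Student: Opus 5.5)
The plan is to look for the correction $\eta$ inside $H^1(C,T_C)_\omega$. Then $\zeta=\zeta_p+\eta$ satisfies condition (1) automatically, because $\zeta_p\cdot\omega=0$ by construction via \eqref{cohom seq}. So everything reduces to arranging condition (2). By \eqref{perp jacobian ideal}, $R^5=J_4^{\perp}$, where for $d=5$ we have $2d-6=4$ and $J_4=\langle \partial_{z_0}F,\partial_{z_1}F,\partial_{z_2}F\rangle$ is $3$-dimensional. Thus we must find $\eta\in H^1(C,T_C)_\omega$ with
\[
\langle \eta, \partial_{z_i}F\rangle = -\langle \zeta_p,\partial_{z_i}F\rangle ,\qquad i=0,1,2,
\]
with respect to the Serre pairing \eqref{serre duality plane curves}. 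This is a linear system: it has a solution as soon as the restriction map
\[
H^1(C,T_C)_\omega \longrightarrow J_4^{*},\qquad \eta\mapsto \langle \eta,\cdot\rangle|_{J_4},
\]
is surjective.

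Surjectivity is equivalent to the following statement: no nonzero $w\in J_4$ annihilates all of $H^1(C,T_C)_\omega$. Dualizing \eqref{cohom seq}, the annihilator of $H^1(C,T_C)_\omega=\ker(\cdot\omega)$ in $H^0(C,\omega_C^{\otimes 2})$ is the image of the transpose of $\cdot\omega$, namely $\omega\cdot H^0(C,\omega_C)=h\,t\cdot S^2\subset H^0(C,\mathcal O_C(4))$. Since $4<5=\deg F$, this is an identity of polynomials in $S^4$. It therefore suffices to show that
\[
J_4\cap h\,t\,S^2=0 \quad\text{in } S^4 .
\]

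This last step is the heart of the argument and the step I expect to be the main obstacle. Suppose $\sum_i c_i\partial_{z_i}F=t\cdot(hq)$ with $q\in S^2$. Then $x:=[hq]\in R^3$ is killed by multiplication by $t$ into $R^4$. The plan is to use hypothesis (a) together with Gorenstein duality ($R^k$ dual to $R^{9-k}$) to get injectivity of $\cdot t:R^3\to R^4$. By duality this is equivalent to surjectivity of $\cdot t:R^5\to R^6$, and I would try to deduce that from the isomorphism $R^4\cong R^5$ given by (a) and the standard generation of $R$ in degree one. If that works, then $hq\in J_3=0$, hence $q=0$ and $w=0$.

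If this duality route does not close cleanly, I would fall back on a direct genericity argument. A relation $t\mid\sum c_i\partial_{z_i}F$ says that a polar quartic of $F$ contains the line $\ell_t$. For $F$ generic and $t$ general this fails by a dimension count, which could also be checked on the Fermat quintic as in Lemma \ref{lem:injective-L}. Hypothesis (b) guarantees that $Z=H+T$ is reduced. Hence the Schiffer class $\zeta_p$, for $p\in\operatorname{Supp}(T)$, is the simple one of \eqref{schiffer}, and $H^1(C,T_C)_\omega=\partial_Z(H^0(\mathcal O_Z))$ is spanned by the Schiffer classes at the $10$ points of $Z$. This makes the pairings explicit ($\langle\zeta_q,w\rangle$ is, up to a nonzero constant, $w(q)/\omega'(q)$), which is convenient for the explicit check.
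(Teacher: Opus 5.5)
\textbf{There is a genuine gap: your argument proves only a trivial reading of the proposition, not the content the paper proves.} As written, the proposition places no constraint on $\eta$. Your reduction inherits this. Indeed, $\eta=-\zeta_p$ already lies in $H^1(C,T_C)_\omega$ and solves your linear system, giving $\zeta=0\in R^5$. So proving surjectivity of $H^1(C,T_C)_\omega\to J_4^{*}$ adds nothing.

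That surjectivity is itself correct. Hypothesis (a) makes $\cdot t\colon R^5\to R^6$ surjective, because surjectivity propagates upward in a standard graded algebra. Gorenstein duality then turns this into injectivity of $\cdot t\colon R^3\to R^4$.

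What the paper actually proves, and what the next theorem uses, is that $\eta$ can be taken in $\tilde\Pi=\langle\zeta_{q_1},\dots,\zeta_{q_4}\rangle$, where $\mathrm{Supp}(T)=\{p,q_1,\dots,q_4\}$. Only then can one write $\zeta=\partial_Z\theta$ with $\theta$ vanishing on $\mathrm{Supp}(H)$ and $\theta(p)=1$. In that case Griffiths' formula, applied to $\nu(y)=\mathcal O_{C_y}(10D-2nH)$, sees only the coefficient of $D$ at $p$. If $\eta$ has components at points of $H$ (where that divisor has coefficients $-2n$ or $10a_1-2n$), or at $p$ itself, nothing can be concluded.

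Your linear-system framework does work once you restrict $\eta$ to $\tilde\Pi$. However, the dual condition then becomes $J_4\cap H^0(C,\omega_C^{\otimes 2}(-q_1-\dots-q_4))=0$: no nonzero polar quartic $\sum_i c_i\partial_{z_i}F$ passes through the four points $q_1,\dots,q_4$. This is much stronger than your condition $J_4\cap ht\,S^2=0$. Your injectivity of $\cdot t\colon R^3\to R^4$ only handles the subcase where the polar also vanishes at $p$. Then it vanishes at five points of $\ell_t$ and is divisible by $t$; this is the paper's Case 1.

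The missing idea is the case of a polar $f_0$ vanishing at $q_1,\dots,q_4$ but not at $p$.
\begin{itemize}
\item Comparing restrictions to $\ell_t$ gives $F=Q_4t+Lf_0$ with $L$ linear.
\item By Euler's formula $F\in J_5$, so $Q_4t\in J_5$.
\item Hypothesis (a) then forces $Q_4\in J_4$.
\item Hence $F\in tJ_4+LJ_4$. Together with Euler's relation this gives a nonzero linear syzygy among $\partial_{z_0}F,\partial_{z_1}F,\partial_{z_2}F$. That is impossible, since these partials form a regular sequence of quartics for smooth $C$.
\end{itemize}
This case is where hypothesis (a) is genuinely needed, and your proposal never reaches it.
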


\begin{proof}
    Let $p \in \mathrm{Supp}(T)$ be a point. By the genericity of $t$ write
    \[
        \mathrm{Supp}(T) = \ell_t \cap C
        = \{p, q_1, q_2, q_3, q_4\}.
    \]
    Consider the subspaces of $H^1(C,T_C)$
    \[
        \Pi := 
            \langle \zeta_{p}, \zeta_{q_1}, \zeta_{q_2}, 
                    \zeta_{q_3}, \zeta_{q_4} \rangle,
        \qquad
        \tilde{\Pi} := 
            \langle \zeta_{q_1}, \zeta_{q_2}, 
                    \zeta_{q_3}, \zeta_{q_4} \rangle,
    \]
    generated by the corresponding higher Schiffer variations.  
    By Lemma~\ref{schiffer indip}, these classes are linearly independent, and therefore $\dim \Pi = 5, \dim \tilde{\Pi} = 4$. Since $\dim R^5 = 12$, the Grassmann formula implies
    \[
        \dim(\Pi \cap R^5)
            = \dim \Pi + \dim R^5 
              - \dim(\Pi + R^5)
            \ge 2.
    \]
    Our goal is to show that
    \[
        \dim(\tilde{\Pi} \cap R^5) = 1.
    \]
    If this holds, then there exists a linear combination
    \[
        \zeta_{p} + \sum_{i=1}^4 \alpha_i \, \zeta_{q_i} \in \Pi \cap R^5,
    \]
    which, by construction, satisfies $\zeta \cdot \omega = 0$.
    Recall by Serre duality \eqref{serre duality plane curves}, one has
    \[
        \tilde{\Pi}^{\perp}
          = H^0\left(C, \omega_C^{\otimes 2}(-q_1 - \cdots - q_4)\right).
    \]
    Because $q_1,\dots,q_4$ are general points on $C$, it follows that $\dim(\tilde{\Pi}^{\perp}) = 11$.
    Moreover, by \eqref{perp jacobian ideal}, one has $(R^5)^{\perp} = J_4$, and then $\dim((R^5)^{\perp}) = 3$.
    Assume, for contradiction, that $\dim(\tilde{\Pi} \cap R^5) \ge 2$.  
    Passing to orthogonal complements and applying Grassmann’s formula, we have
    \[
        \dim\left((\tilde{\Pi} \cap R^5)^{\perp}\right)
             = \dim(\tilde{\Pi}^{\perp} + (R^5)^{\perp})
             = \dim(\tilde{\Pi}^{\perp}) + \dim((R^5)^{\perp})
               - \dim(\tilde{\Pi}^{\perp} \cap (R^5)^{\perp})
             \le 13.
    \]
    Hence
    \[
        \dim(\tilde{\Pi}^{\perp} \cap (R^5)^{\perp}) \ge 1.
    \]
    This means that there exists a nonzero partial derivative
    \[
        f_i = \partial_{z_i} f \in J_4
    \]
    vanishing at the four points $q_1,\dots,q_4$.  
    Without loss of generality, we may take $i = 0$.
    We distinguish two cases.

    \textbf{Case 1:} $f_0(p) = 0$.  
    Then $f_0$ vanishes at five points in linear general position.  
    Since $\deg(f_0)=4$, it follows that $f_0$ must factor as
    \[
        f_0 = Q_3 \cdot L,
    \]
    with $\deg Q_3 =3$ and $\deg L =1$.  
    Since $C$ is general, its partial derivatives do not admit nontrivial factorizations, yielding a contradiction.

    \textbf{Case 2:} $f_0(p) \neq 0$.  
    Both $f_0$ and $t$ vanish at the points $q_1,\dots,q_4$, hence there exist polynomials $Q_4$ of degree $4$ and $L$ of degree $1$ such that
    \[
        f = Q_4 \cdot t + L \cdot f_0.
    \]
    This implies that $Q_4 \cdot t \in J_5$.  
    Since $t$ is general, Lemma~\ref{lem:injective-L} forces $Q_4$ either to vanish or to lie in the span of the partial derivatives of $f$.  
    In either case this contradict the generality of $C$. Thus $\dim(\tilde{\Pi} \cap R^5) = 1$, completing the proof.
\end{proof}

Before stating the next result, we apply Theorem~\ref{main th 1} to the subvariety $\mathcal{V}_5 \subset \mathcal{M}_6$,
parametrizing smooth plane curves of degree $5$.  
Let $\psi$ and $\nu$ be as in the hypotheses of Theorem~\ref{main th 1}.  
Since $g = 6$ and
\[
    \operatorname{codim}_{\mathcal{M}_6}(\mathcal{V}_5) = 3,
\]
if the function $\nu$ is locally constant along $\mathcal{V}_5$, then Theorem~\ref{main th 1} yields
\begin{equation}\label{bound degree 5}
    d_S(\psi) \ge 2.
\end{equation}

We can now refine the bound \eqref{bound degree 5} and establish a generalization of Proposition~\ref{prop quartic} and prove the Proposition \ref{prop_d_4_5 introduz} in the case of degree $5$.

\begin{theorem}
    Let $\pi: \mathcal{C} \to Y$ be a family of plane curves of degree $5$. Assume that the modular map $m: Y \to \mathcal{M}_6$ contains an analytic open subset of $\mathcal{V}_5$.  Let $\psi$ be a section of the Picard bundle $\mathrm{Pic}^n(\pi)$ and
    \[
        \nu = 10\psi - n\kappa
    \]
    be the associated normal function. If $\operatorname{rk}(\nu) = 0$, i.e. $\nu$ is locally constant, then 
    \[
        d_S(\psi) \geq 3.
    \]
    Moreover, if the equality holds, then the points of $\mathrm{Supp}(\psi)$ must be collinear, and they lie either on a bitangent line or on a flex line.
\end{theorem}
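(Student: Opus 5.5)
We argue by contradiction, refining the bound \eqref{bound degree 5}, which already gives $d_S(\psi)\ge 2$. It therefore suffices to rule out $d_S(\psi)=2$ and then to analyse the equality case $d_S(\psi)=3$.

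Let $y$ be a general point, so that $C=C_y$ is a generic quintic. Write $\psi(y)=\mathcal{O}_C(D)$ with $\mathrm{Supp}(D)=\{p_1,\dots,p_d\}$ and $d\le 3$, and represent
\[
\nu(y)=\mathcal{O}_C(10D-n\cdot\mathrm{div}(\omega))
\]
for a suitable form $\omega$ vanishing on $\mathrm{Supp}(D)$. We want $\omega=h\cdot t$, the product of two linear sections, as in Proposition \ref{prop existence schiffer}. If $d\le 2$, take $\ell_t$ to be a general line through $p_1$ and $\ell_h$ a general line through $p_2$. If $d=3$ and the points are not collinear, take $\ell_t$ through $p_1$ only and $\ell_h$ through $p_2,p_3$. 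In both cases we use that $t$ can be chosen general among lines through one point, so that hypotheses (a) and (b) of Proposition \ref{prop existence schiffer} still hold. This is plausible because Lemma \ref{lem:injective-L} only requires $\alpha\beta\gamma\ne0$.

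Proposition \ref{prop existence schiffer} gives $\zeta=\zeta_{p_1}+\eta\in R^5$ with $\zeta\cdot\omega=0$, where $\eta$ is a combination of the Schiffer classes $\zeta_{q_i}$ at the other points $q_i$ of $\ell_t\cap C$. Since $m(Y)$ contains an open subset of $\mathcal V_5$, $\zeta$ lifts to some $\xi\in T_{Y,y}$. Remark \ref{remark loc const} then gives $\delta\nu(y)(\xi\otimes\omega)=0$, and Griffiths' formula (Theorem \ref{Griff_theorem}) evaluates this as
\[
10a_1-n\,\mathrm{ord}_{p_1}(\omega)+\sum_i\alpha_i\bigl(-n\,\mathrm{ord}_{q_i}(\omega)\bigr)=0 .
\]
The $q_i$ are general points outside $\mathrm{Supp}(D)$. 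Varying $\ell_t$ in the pencil through $p_1$ and using independence of the Schiffer classes, we aim to force $n=0$ and $a_1=0$, contradicting the choice of $D$; the same argument applies at each $p_j$. This shows $d_S(\psi)\ge3$.

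For the equality case, the argument above shows $d=3$ is impossible for non-collinear points, so the three points lie on a line $\ell$. Then $\omega=\ell\cdot\ell'$ for a general line $\ell'$ gives $\mathrm{div}(\omega)=(\ell\cdot C)+(\ell'\cdot C)$. Applying the Schiffer construction at the points of $\ell'\cap C$ and the vanishing of $\delta\nu$, we want to kill every point of $\ell\cap C$ outside $\mathrm{Supp}(D)$. This shows $\ell\cap C$ is supported on the $p_i$, so $\ell$ is bitangent or a flex line. We then get the multiplicity relations exactly as in Theorem \ref{theo-spin-section} and Proposition \ref{prop quartic}.

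The main obstacle is the bookkeeping in the non-collinear step. We must check that the coefficients $\alpha_i$ of $\eta$ do not conspire to cancel the $p_1$ contribution. We also need genericity of $t$ within lines through a prescribed point, which requires an open-condition argument extending Lemma \ref{lem:injective-L} and Proposition \ref{prop existence schiffer}.
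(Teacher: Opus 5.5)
There is a genuine gap at the decisive step of the $d\le 2$ (and non-collinear $d=3$) argument. With your representative $10D-n\,\mathrm{div}(\omega)$, $\mathrm{div}(\omega)=H+T$, Griffiths' formula gives $10a_1-n\bigl(1+\sum_i\alpha_i\bigr)=0$, and you leave the term $n(1+\sum_i\alpha_i)$ unresolved. Your proposed cure, varying $\ell_t$ in the pencil through $p_1$ to force $n=0$ and $a_1=0$, cannot work. For every admissible class $\zeta=\zeta_{p_1}+\sum_i\alpha_i\zeta_{q_i}\in R^5$ one automatically has $1+\sum_i\alpha_i=0$. To see this, note that $\mathcal{O}_{C_y}(H_y-T_y)\cong\mathcal{O}_{C_y}(1)\otimes\mathcal{O}_{C_y}(-1)$ is trivial along plane deformations, so $H_y-T_y$ represents the zero normal function, whose infinitesimal invariant vanishes. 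Theorem \ref{Griff_theorem}, with the same $\xi\otimes\omega$ and $\partial_Z\theta=\zeta$, then gives $0=\sum_{z\in H}\theta(z)-\sum_{z\in T}\theta(z)=-(1+\sum_i\alpha_i)$. So every choice of $\ell_t$ yields the same equation, and nothing about $n$ can be extracted; $n$ is the degree of $\psi$ and has no reason to vanish.

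What you are missing is the paper's choice of a representative adapted to the Schiffer class. Since $\omega_{C_y}\cong\mathcal{O}_{C_y}(2)$ on every fibre, write $K=2H$ and use $\nu(y)=\mathcal{O}_{C_y}(10D-2nH)$. This divisor is supported in $\mathrm{Supp}(H)\cup\{p_1\}\subseteq Z$ and has coefficient $0$ at every $q_i$, while $\theta$ vanishes on $H$. Hence $\eta$ contributes nothing, and Griffiths' formula gives directly $\delta\nu(y)(\xi\otimes\omega)=10a_1$, so $a_1=0$ independently of the $\alpha_i$. The same device settles the non-collinear case $d=3$: take $K=2H$ with $\ell_h$ the line through $p_2,p_3$. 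Once you know $1+\sum_i\alpha_i=0$, your own equation also reduces to $10a_1=0$, but that identity is exactly the missing input.

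Your equality-case step does not work either. Classes centred at points of $\ell'\cap C$ correspond to functions $\theta$ vanishing on $\ell\cap C$, so Griffiths' formula only sees the coefficients of the representative at points of $\ell'$. For each of the natural representatives, $K=2(\ell\cdot C)$, $(\ell+\ell')\cdot C$ or $2(\ell'\cdot C)$, the resulting equation is $0=0$ by the identity above. It can therefore never exclude points of $\ell\cap C$ outside $\mathrm{Supp}(D)$. To see those points you would need classes in $R^5$ centred on the special line $\ell$ itself. Proposition \ref{prop existence schiffer} is not automatic there, since its proof uses genericity of the points on the line, and the formula would then involve the unknown values of $\theta$ at the $p_i$. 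You also cannot import the multiplicity relations of Theorem \ref{theo-spin-section}: its proof uses that Schiffer classes at arbitrary points are tangent to $Y$, which fails on $\mathcal{V}_5$. The paper itself passes rather quickly from collinearity to $D$ being a multiple of a line section supported on $\{p_1,p_2,p_3\}$, but the mechanism you propose does not supply this step.
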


\begin{proof}
    Let $y \in Y$ be a general point, and write 
    \[
        \psi(y) = \OO_{C_y}(D) \in \Pic^{n}(\pi),
    \]
    where $D = \sum_{i=1}^d a_i p_i$ is a divisor of degree $n$ on $C_y$ with support $\Supp(D) = \{p_1, \dots, p_d\}$.  
    By~\eqref{bound degree 5} assume, for contradiction, that $d = 2$. Then
    \[
        D = a_1 p_1 + a_2 p_2,
        \qquad
        \Supp(D) = \{p_1, p_2\}.
    \]
    
    Choose general sections $h, t \in H^0(C_y, \OO_{C_y}(1))$ satisfying the hypotheses of Proposition~\ref{prop existence schiffer} and such that
    \[
        p_1 \in \Supp(H), 
        \qquad
        p_2 \in \Supp(T),
    \]
    where $H = \operatorname{div}(h)$ and $T = \operatorname{div}(t)$ are the corresponding divisors on $C_y$.
    Using the isomorphism $H^0(C_y,\omega_{C_y}) \cong H^0(C_y,\mathcal{O}_{C_y}(2))$, we identify the canonical divisor with $K = 2H$. Hence the normal function can be
    written as
    \[
        \nu(y) = \mathcal{O}_{C_y}(10D - nK)
               = \mathcal{O}_{C_y}(10D - 2nH).
    \]

    Let $\omega \equiv h\cdot t \in H^0(C_y,\omega_{C_y})$ be the holomorphic form associated to the divisor $H + T$.
    By Proposition~\ref{prop existence schiffer}, there exists a class
    \[
        \zeta = \zeta_{p_2} + \eta \in R^5,
    \]
    where $\zeta_{p_2}$ is the higher Schiffer variation centered at $p_2$. 
    Let $\xi \in T_{Y,y}$ be such that $\kappa_y(\xi) = \zeta$.  
    Since $\nu$ is locally constant, the Griffiths infinitesimal invariant yields
    \[
        0 = \delta\nu(y)(\xi \otimes \omega) = a_2.
    \]
    Thus $a_2 = 0$, and therefore $D = a_1 p_1$,
    so $d_S(D) = 1$, contradicting~\eqref{bound degree 5}.
    
    For the second part of the theorem, let $d = 3$. Then 
    \[
        D = a_1 p_1 + a_2 p_2 + a_3 p_3, 
        \qquad 
        \Supp(D) = \{p_1, p_2, p_3\}.
    \]
    Assume, for contradiction, that $p_1, p_2, p_3$ are not collinear.  
    Let $\ell \subset \PP^2$ be the line through $p_1$ and $p_2$, and let 
    $r \subset \PP^2$ be a general line through $p_3$ such that it does not meet $\ell$ on $C_y$.  
    Set
    \[
        L = (\ell \cdot C_y), 
        \qquad 
        R = (r \cdot C_y),
    \]
    and take $K = 2L$ as representative of the canonical divisor in the normal function.
    
    Consider $\omega \in H^0(C_y,\omega_{C_y})$ be the holomorphic form associated to the divisor $L+R$. Applying the same procedure as before, by taking the Schiffer class 
    \[
        \zeta = \zeta_{p_3} + \eta \in R^5,
    \]
    we obtain that $a_3 = 0$, contradicting the first part of the theorem. Then, $d \geq 3$. If $d = 3$, we proved that the points $p_1,p_2$ and $p_3$ must lie on the same line. That is
    \[
        D = nD',
    \]
    with $D'$ divisor on $C_y$ cut by a line, with $\deg(D') = 5$ and $\mathrm{Supp}(D') =\{p_1,p_2,p_3\}$. Hence, up to change the order of the points, we must have
    \[
        D' = 2p_1 + 2p_2 + p_3 \quad \text{or} \quad
        D' = 3p_1 + p_2 + p_3.
    \]
    That is, $p_1,p_2$ and $p_3$ must lie either on a bitangent line or on a flex line.

\end{proof}

\section*{Funding}
L.Fassina and G.P. Pirola are members of GNSAGA (INdAM).

\newpage
\printbibliography

\end{document}